\documentclass[final]{dmtcs-episciences}

\usepackage[utf8]{inputenc}
\usepackage[round,numbers]{natbib}

\usepackage{amsmath,amssymb,amsthm}
\usepackage{tikz}
\usepackage{subfig}
\usepackage{rotating}
\tikzstyle{vertex}=[circle, draw, inner sep=0pt, minimum size=4pt]
\newcommand{\vertex}{\node[vertex]}

\newcommand{\R}{\mathbb{R}}

\newcommand{\Z}{\mathbb{Z}}

\newcommand{\G}{\mathcal{G}}

\newcommand{\bis}[1]{B(#1,G)}


\newcommand{\PG}{P_G}

\newcommand{\facets}{N}

\newcommand{\Cws}{C_{WS}}

\theoremstyle{plain}
\newtheorem{theorem}{Theorem}[section]
\newtheorem{prop}[theorem]{Proposition}

\newtheorem{conjecture}[theorem]{Conjecture}

\newtheorem{lemma}[theorem]{Lemma}
\newtheorem{corollary}[theorem]{Corollary}

\theoremstyle{definition}

\newtheorem{definition}[theorem]{Definition}

\newtheorem{example}[theorem]{Example}

\author{Benjamin Braun\affiliationmark{1}
  \and Kaitlin Bruegge\affiliationmark{2}
  \and Matthew Kahle\affiliationmark{3}}
\title[Facets of Random Symmetric Edge Polytopes\ldots]{Facets of Random Symmetric Edge Polytopes, Degree Sequences, and Clustering}
\affiliation{
  University of Kentucky, USA\\
  University of Cincinnati, USA\\
  The Ohio State University, USA}
\keywords{Symmetric edge polytope, facets, clustering, random graphs}

\begin{document}

\publicationdata{vol. 25:2 }{2023}{16}{10.46298/dmtcs.9925}{2022-08-16; 2022-08-16; 2023-05-03; 2023-09-25}{2023-09-26}
\maketitle
\begin{abstract}
Symmetric edge polytopes are lattice polytopes associated with finite simple graphs that are of interest in both theory and applications.
We investigate the facet structure of symmetric edge polytopes for various models of random graphs.
For an Erd\H{o}s-Renyi random graph, we identify a threshold probability at which with high probability the symmetric edge polytope shares many facet-supporting hyperplanes with that of a complete graph.
We also investigate the relationship between the average local clustering, also known as the Watts-Strogatz clustering coefficient, and the number of facets for graphs with either a fixed number of edges or a fixed degree sequence.
We use well-known Markov Chain Monte Carlo sampling methods to generate empirical evidence that for a fixed degree sequence, higher average local clustering in a connected graph corresponds to higher facet numbers in the associated symmetric edge polytope.
\end{abstract}

\section{Introduction}\label{sec:intro}
Given a finite simple graph \(G\), the \emph{symmetric edge polytope} is defined as
\[
\PG := \{\pm(e_i-e_j):ij\in E(G)\} \, .
\]
Symmetric edge polytopes, also called \emph{type PV adjacency polytopes}, have been the subject of extensive recent study~\cite{braunbruegge2022facets,chen2021facets,chen2017counting,chen2020graphadjacency,dali2022gammavector,dalidelucchimichalek,higashitanifanopolytopes,higashitanijochemkomateusz,kalman2022ehrhart,kalman2022h*,matsuietal2011,smoothfanoehrhart2012,osughitsuchiya2020,symmetricedgematchingpolys}.
One problem of interest is to establish relationships between the combinatorial structure of \(G\) and geometric properties of \(\PG\), motivated by applications in both pure and applied contexts.
One major goal is to compute the normalized volume of \(\PG\), where the normalized volume of a lattice polytope \(P\) is volume with respect to the integer lattice in the affine span of \(P\), normalized so that a minimal-volume lattice simplex has volume \(1\).
Symmetric edge polytopes are known to admit regular unimodular triangulations~\cite{higashitanijochemkomateusz}, and thus the normalized volume is the number of maximal simplices in such a triangulation.
Every regular unimodular triangulation of \(\PG\) induces a regular unimodular triangulation of the boundary of \(\PG\), and hence of each facet.
Because \(\PG\) is reflexive, these techniques involving triangulations and facet structure can be used to study normalized volume.
These triangulations have also been used to study Ehrhart-theoretic properties of \(\PG\), e.g.,~\cite{higashitanijochemkomateusz}.

This is important for applications, because the normalized volume of \(\PG\) is an upper bound on the number of non-zero complex solutions to Laurent polynomial systems arising from Kuramoto models of coupled oscillators~\cite{chen2017counting}.
These upper bounds provide information regarding the possible number of synchronization configurations for a Kuramoto model.
As discussed in~\cite{chen2017counting}, these upper bounds also provide stopping criteria for iterative- and homotopy-based solvers for these models.
In private communication, Tianran Chen and Robert Davis stated to the authors that it would be interesting to estimate an upper bound on the number of facets of a symmetric edge polytope in terms of properties of the underlying graph, due to implications regarding the complexity of combinatorial approaches to solving PV-type power-flow equations~\cite{chendavispersonal}.

Even when considering the general setting of all connected graphs on $n$ vertices, the maximum and minimum number of facets of $P_G$ has not yet been established.
Two of the authors~\cite{braunbruegge2022facets} have previously conjectured the following candidates for facet-maximizing and facet-minimizing connected graphs on a fixed number of vertices.
Note that \(K_n\) denotes the complete graph with \(n\) vertices, and \(K_{a,b}\) denotes the complete bipartite graph with shores of having \(a\) and \(b\) vertices, as shown in Figure~\ref{fig: cmp and cmp bipartite}.
Note also that given a finite list of graphs \(G_1,\ldots,G_t\) with one vertex \(v_i\) from each \(G_i\), the wedge of the graphs is obtained by identifying \(v_1,\ldots,v_t\) to a single vertex.
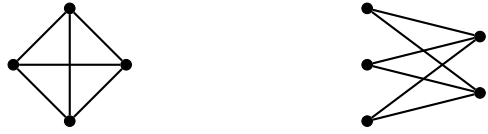
\begin{figure}
    \begin{center}
    \begin{tikzpicture}

\begin{scope}[scale=0.75, xshift=0, yshift=0]
	\vertex[fill](v1) at (0,0) {};
	\vertex[fill](v2) at (-1,1) {};
	\vertex[fill](v3) at (-1,-1) {};	
	\vertex[fill](v4) at (-2,0) {};

	\draw[thick] (v1)--(v2);
	\draw[thick] (v1)--(v3);
	\draw[thick] (v1)--(v4);	
	\draw[thick] (v2)--(v3);
	\draw[thick] (v2)--(v4);
    \draw[thick] (v3)--(v4);
    
    \end{scope}
    
    \begin{scope}[scale=0.75, xshift=150, yshift=0]
	\vertex[fill](v1) at (-1,1) {};
	\vertex[fill](v2) at (-1,0) {};
	\vertex[fill](v3) at (-1,-1) {};
	\vertex[fill](v4) at (1,0.5) {};
	\vertex[fill](v5) at (1,-0.5) {};	

	\draw[thick] (v1)--(v4);
	\draw[thick] (v1)--(v5);
	\draw[thick] (v2)--(v4);
	\draw[thick] (v2)--(v5);	
	\draw[thick] (v3)--(v4);
	\draw[thick] (v3)--(v5);


    \end{scope}
    \end{tikzpicture}
    \end{center}
    \caption{Above are the complete graph \(K_4\) (left), and the complete bipartite graph \(K_{3,2}\) (right).}
    \label{fig: cmp and cmp bipartite}
\end{figure}

\begin{conjecture}[Braun and Bruegge~\cite{braunbruegge2022facets}]\label{conj:maxmin}
    Let $n\geq 3$.
    \begin{enumerate}
        \item For $n=2k+1$, the maximum number of facets for $P_G$ for a connected graph $G$ on $n$ facets is $6^k$, which is attained by a wedge of $k$ cycles of length three.
        \item For $n=2k$, the maximum number of facets for $P_G$ for a connected graph $G$ on $n$ facets is $14\cdot 6^{k-2}$, which is attained by a wedge of $K_4$ with $k-2$ cycles of length three.
        \item For $n=2k+1$, the minimum number of facets for $P_G$ for a connected graph $G$ on $n$ facets is $3\cdot2^k-2$, which is attained by $K_{k,k+1}$.
        \item For $n=2k$, the minimum number of facets for $P_G$ for a connected graph $G$ on $n$ facets is $2^{k+1}-2$, which is attained by $K_{k,k}$.
    \end{enumerate}
\end{conjecture}

Our goal in this work is to study the facets of \(\PG\) for various random graph models, with an emphasis on the number of facets, denoted by \(\facets(\PG)\) or \(\facets(G)\).
First, we investigate properties of the facets of \(\PG\) when \(G\) is an Erd\H{o}s-Renyi random graph, establishing a threshold probability for which \(\PG\) and \(P_{K_n}\) share facet-supporting hyperplanes.
Second, we present the results of empirical investigations regarding the relationship between clustering metrics on \(G\) and \(\facets(G)\).
The graph metric of interest in this work is the \emph{average local clustering coefficient of \(G\)}, also called the \emph{Watts-Strogatz clustering coefficient}.
For each vertex \(v\)  of \(G\), define the local clustering coefficient \(\Cws(v)\) to be the number of edges connecting two neighbors of \(v\) divided by the number of possible edges between neighbors of \(v\).
The average local clustering coefficient is then defined as
\[
\Cws=\frac{1}{|V(G)|}\sum_{v\in V(G)}\Cws(v) \, .
\]
This value is a measure of graph transitivity introduced by Watts and Strogatz~\cite{wattsstrogatz1998} in the context of network science. 

Figure~\ref{fig:8allconnected} contains a plot of average local clustering against number of facets for all $11,117$ connected graphs on $8$ vertices.
There are some apparent patterns in this data.
For example, we have that $N(K_8)=254$ and there are many graphs with $N(G)\approx 254$ having a wide range of clustering values.
Further, there is a general trend that the number of facets increases with the clustering; the slope of the fit line for this data is approximately $148.46$.

Given the rapid rate at which the number of connected graphs on $n$ vertices grows, and the computational expense of computing the facets of a polytope given its vertices~\cite{convexhullcomputations}, it is not productive to attempt to compute facet data for all connected graphs on $n$ vertices in general.
However, in this paper we extend our observations for the $n=8$ data by considering experimental evidence for various families of graphs with a fixed graph invariant. 
These experiments involve generating ensembles of graphs via well-known Markov Chain Monte Carlo techniques.
Specifically, we present experimental evidence that for connected graphs with a fixed degree sequence, higher average local clustering tends to produce symmetric edge polytopes with a larger number of facets than those with lower clustering.
All computations in this work were done with SageMath~\cite{sage} (for graph generation/sampling and clustering metrics) and Normaliz~\cite{normaliz,pyramiddecomp} (for facet computations).
Because it is computationally expensive to compute the number of facets for a polytope, we limit our empirical studies to ensembles of graphs with less than \(20\) vertices; this corresponds to polytopes of dimension at most \(19\).

\begin{figure}
    \centering
    \includegraphics[width=.5\textwidth]{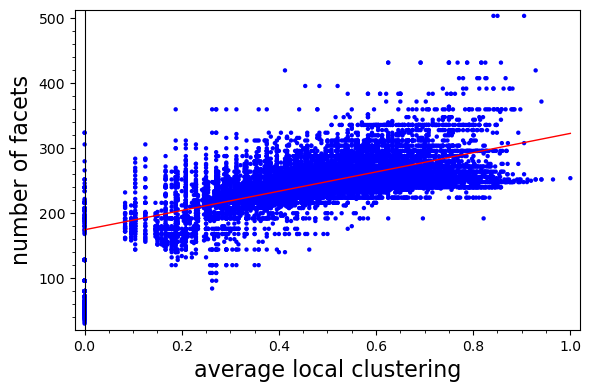}
    \caption{Average local clustering against number of facets for every connected graph \(G\) with 8 vertices. The line of best fit for this data is also included.}
    \label{fig:8allconnected}
\end{figure}

This paper is structured as follows.
In Section~\ref{sec:ERmodel}, we investigate the number of facets for random graphs using the Erd\H{o}s-Renyi model.
We provide empirical data and we prove asymptotic results regarding the existence of certain facets for random graphs.
In Section~\ref{sec:MCMC}, we use Markov Chain Monte Carlo sampling techniques to generate random ensembles of graphs with either a fixed number of edges or a fixed degree sequence, and consider the relationship between average local clustering and facet numbers.
Finally, in Section~\ref{sec:discussion}, we provide a toy example of a theoretical study regarding facet-maximizing graphs with a fixed degree sequence, and we conclude with a general discussion of our results and data.


\section{The Erd\H{o}s-Renyi Model}\label{sec:ERmodel}

A standard model for empirically sampling graphs is the Erd\H{o}s-Renyi model, denoted \(G(n,p)\), for \(n\) a positive integer and \(0<p<1\).
In \(G(n,p)\), each edge occurs independently with probability \(p\); thus, a graph on \(n\) vertices having \(m\) edges occurs with probability \(p^m(1-p)^{\binom{n}{2}-m}\).
Note that for \(G=(V,E)\sim G(n,p)\), meaning a random graph sampled from \(G(n,p)\), the expected value of \(\Cws\) is \(p\).

\subsection{Facets of \(G(n,p)\)}

There are many well-known limitations to the model \(G(n,p)\), most notably that the expected degree distribution, average shortest path length, and various clustering metrics often do not match observations in real-world networks~\cite{newmannetworks}.
However, one of the major advantages of working with \(G(n,p)\) is that it is defined in a way that often allows theoretical results to be obtained.
For example, see~\cite{dali2022gammavector} for the use of the \(G(n,p)\) model in the study of Ehrhart theory for symmetric edge polytopes.
In this subsection, we prove that for certain values of $p$, the typical $G(n,p)$ will have facets of a prescribed form.

An established approach to the facet description of $\PG$ involves certain functions \(f:V\rightarrow \Z\) on the set \(V\) of vertices in \(G\). 
Specifically, it was shown in~\cite[Theorem 3.1]{higashitanijochemkomateusz} that the facets of \(\PG\) arise as follows.

\begin{theorem}[Higashitani, Jochemko, Michalek \cite{higashitanijochemkomateusz}]\label{thm:facetdescription}
Let \(G=(V,E)\) be a finite simple connected graph.
Then \(f:V\rightarrow\Z\) is facet-defining if and only if both of the following hold.
\begin{enumerate}
    \item[(i)]For any edge \(e=uv\) we have \(|f(u)-f(v)|\leq 1\).
    \item[(ii)] The subset of edges \(E_f=\{e=uv\in E\::\: |f(u)-f(v)|= 1\}\) forms a spanning connected subgraph of \(G\).
\end{enumerate}
\end{theorem}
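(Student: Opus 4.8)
The plan is to identify each function $f:V\rightarrow\Z$ with the linear functional $x\mapsto\sum_{v}f(v)x_v$ on $\R^V$ and to exploit two structural facts about $\PG$. First, since $G$ is connected, the vectors $e_i-e_j$ for $ij\in E$ span the hyperplane $\{x:\sum_v x_v=0\}$ in which $\PG$ lives; as $\PG$ is centrally symmetric it contains the origin in its relative interior, so $\dim\PG=|V|-1$ and the facets are exactly the faces of dimension $|V|-2$. Second, because $\PG$ is reflexive with the origin as its unique interior lattice point, every facet is cut out by a supporting hyperplane of the form $\{x:\langle f,x\rangle=1\}$ for an integral functional $f$. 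This height-one normalization is the sense in which ``facet-defining'' must be read, and it is insensitive to adding a constant to $f$, since that only shifts $f$ by a multiple of the all-ones vector and does not change its restriction to $\{\sum_v x_v=0\}$; note that this is precisely what rules out non-primitive rescalings such as $2f$.

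With this setup I would first match condition (i) to validity of the inequality. For a generator one has $\langle f,\pm(e_i-e_j)\rangle=\pm(f(i)-f(j))$, so $\langle f,x\rangle\le 1$ holds on every vertex of $\PG$ if and only if $|f(u)-f(v)|\le 1$ for every edge $uv$, which is exactly (i). Assuming (i), the generators lying on $\{\langle f,\cdot\rangle=1\}$ are precisely the vectors $u_e$ obtained by orienting each edge $e=ij\in E_f$ so that $f(i)-f(j)=1$: each edge of $E_f$ contributes exactly one such vector, while edges with $|f(i)-f(j)|=0$ contribute none. Hence the supported face is $F=\conv{u_e:e\in E_f}$, and when $E_f\neq\emptyset$ this is a genuine proper face, since the maximum value $1$ is positive and the generators $-u_e$ sit at height $-1$.

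The crux is the dimension count for $F$. Since all the $u_e$ lie on the affine hyperplane $\{\langle f,\cdot\rangle=1\}$, which misses the origin, the affine span of $\{u_e:e\in E_f\}$ has dimension one less than its linear span. That linear span equals the span of $\{e_i-e_j:ij\in E_f\}$, whose dimension is the rank of the edge-difference vectors of the subgraph $(V,E_f)$, namely $|V|-c$, where $c$ is the number of connected components of $(V,E_f)$; indeed, within each component the differences $e_i-e_j$ span the codimension-one sum-zero subspace, and these add up to $|V|-c$ across components. Therefore
\[
\dim F=\bigl(|V|-c\bigr)-1 .
\]
Consequently $F$ is a facet, i.e. $\dim F=|V|-2$, if and only if $c=1$, which says exactly that $E_f$ forms a spanning connected subgraph of $G$ — condition (ii). Combining the two equivalences gives both directions at once: $f$ is facet-defining precisely when the height-one inequality is valid (equivalent to (i)) and its supported face has maximal dimension (equivalent to (ii)).

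I expect the main obstacle to be careful bookkeeping around the ambient hyperplane and the reflexive normalization rather than any deep difficulty. The two points that genuinely require care are justifying that ``facet-defining'' forces the right-hand side to equal $1$ (so that the characterization is not corrupted by non-primitive functionals), and cleanly executing the passage from linear span to affine span together with the standard rank formula $|V|-c$ for $(V,E_f)$. Once these are in place, both implications follow from the single dimension identity above.
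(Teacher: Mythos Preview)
The paper does not prove this theorem; it is quoted from \cite{higashitanijochemkomateusz} and used as a black box, so there is no proof here to compare against. Your argument is self-contained and correct: the reduction to the height-one normalization via reflexivity, the identification of condition~(i) with validity of $\langle f,x\rangle\le 1$ on the generators, and the dimension count $\dim F=(|V|-c)-1$ via the rank of the cycle space of $(V,E_f)$ all go through as you describe. The one step worth stating a bit more explicitly is why the affine span of $\{u_e\}$ has dimension exactly one less than the linear span: this holds because the direction space of the affine span lies in $\{\langle f,\cdot\rangle=0\}$ while any $u_e$ has $\langle f,u_e\rangle=1$, so $u_e$ is not in that direction space and the sum $\R u_e+(\text{direction space})$ recovers the full linear span. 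With that in hand, both implications follow from the single identity $\dim F=|V|-c-1$, exactly as you say.
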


Symmetric edge polytopes are contained in the hyperplane orthogonal to the span of the all-ones vector, and thus two facet-defining functions are identified if they differ by a constant vector.
The spanning connected subgraphs with edge sets \(E_f\) arising in Theorem~\ref{thm:facetdescription}, called \emph{facet subgraphs}, have further structure. 

\begin{definition}
Given a facet-defining function \(f\) as in Theorem~\ref{thm:facetdescription}, we write \(G_f\) for the corresponding facet subgraph, i.e., the subgraph with vertex set \(V\) and edge set \(E_f\).
\end{definition}

\begin{lemma}[Chen, Davis, Korchevskaia \cite{chen2021facets}]\label{lem:facetsubgraphs}
Let \(G\) be a connected graph. 
A subgraph \(H\) of \(G\) is a facet subgraph of \(G\) if and only if it is a maximal connected spanning bipartite subgraph of \(G\).
\end{lemma}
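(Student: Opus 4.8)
\emph{Proof proposal.} The plan is to prove both implications directly from the facet characterization in Theorem~\ref{thm:facetdescription}, using as the central observation that a facet-defining function induces a proper $2$-coloring and that, conversely, a connected bipartite subgraph carries an essentially unique $2$-coloring that can be promoted to a facet-defining function. Throughout I will use the standard fact that a connected bipartite graph has a unique bipartition (up to swapping the two parts), so that its vertex $2$-coloring is forced.

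First I would prove that every facet subgraph is a maximal connected spanning bipartite subgraph. Let $f$ be facet-defining and set $H = G_f$. By condition (ii) of Theorem~\ref{thm:facetdescription}, $H$ is spanning and connected, so it remains only to check bipartiteness and maximality. For bipartiteness, note that every edge $uv \in E_f$ satisfies $|f(u)-f(v)|=1$, so $f(u)$ and $f(v)$ have opposite parity; hence partitioning $V$ into vertices with even $f$-value and vertices with odd $f$-value gives a proper $2$-coloring of $H$, and $H$ is bipartite. Since $H$ is connected, this is its unique bipartition. For maximality, take any edge $e=uv\in E(G)\setminus E_f$. Condition (i) forces $|f(u)-f(v)|\le 1$, and $e\notin E_f$ rules out $|f(u)-f(v)|=1$, so $f(u)=f(v)$; in particular $u$ and $v$ lie in the same part of the bipartition of $H$. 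Adding $e$ to $H$ therefore places an edge inside a single color class, which destroys bipartiteness. Thus no edge of $G$ can be added to $H$ while preserving bipartiteness, and $H$ is maximal.

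For the converse, I would start from a maximal connected spanning bipartite subgraph $H$ of $G$ and build a facet-defining $f$. Let $(A,B)$ be the unique bipartition of the connected bipartite graph $H$, and define $f\colon V\to\Z$ by $f(v)=0$ for $v\in A$ and $f(v)=1$ for $v\in B$. Because $f$ takes only the values $0$ and $1$, condition (i) holds automatically for every edge of $G$. To verify condition (ii), I must show that $E_f$ equals $E(H)$ exactly. The inclusion $E(H)\subseteq E_f$ is immediate, since each edge of $H$ crosses the bipartition. For the reverse inclusion, suppose $e=uv\in E(G)$ with $f(u)\ne f(v)$, say $u\in A$ and $v\in B$; then $e$ crosses $(A,B)$, so $H+e$ is still bipartite with the same bipartition, as well as connected and spanning. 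Maximality of $H$ then forces $e\in E(H)$. Hence $E_f=E(H)$, which is spanning and connected by hypothesis, so condition (ii) holds; thus $f$ is facet-defining and $G_f=H$ is a facet subgraph.

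I expect the reverse direction to be the main obstacle: the delicate point is not merely exhibiting a $2$-coloring but confirming that the resulting $E_f$ captures \emph{exactly} the edges of $H$ and no more, which is precisely where maximality is indispensable. The uniqueness of the bipartition for connected bipartite graphs is what guarantees that the parity coloring in the forward direction and the $\{0,1\}$-coloring in the reverse direction are well defined and forced, and keeping that uniqueness in view is what makes both halves of the argument line up cleanly.
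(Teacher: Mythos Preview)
Your argument is correct: both directions are handled cleanly via Theorem~\ref{thm:facetdescription}, the parity coloring shows bipartiteness, and the maximality step in each direction is exactly where it needs to be. Note, however, that the paper does not include its own proof of this lemma; it is quoted as a result of Chen, Davis, and Korchevskaia~\cite{chen2021facets}, so there is no in-paper argument to compare against.
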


It is therefore natural to study the expected structure of induced bipartite subgraphs of \(G(n,p)\) across various bipartitions of the verex set.

\begin{definition}
For any bipartition \((A,V\setminus A)\) of the vertex set \(V\) of a graph \(G\), we denote by \(\bis{A}\) the induced bipartite subgraph for the bipartition \((A,V\setminus A)\).
\end{definition}

Note that given any bipartition \((A,V\setminus A)\) of the vertex set of \(G\), if \(\bis{A}\) is connected then there are at least two facets of \(\PG\) having \(G_f=\bis{A}\), specifically the two \(\{0,1\}\)-labelings of the bipartition.
When \(G=K_n\), any bipartition produces a facet subgraph and these \(\{0,1\}\)-labelings are the only facet-supporting functions.

Thus, we are interested in understanding when \(\bis{A}\) is connected for an typical \(G\).
For the following theorem, recall that a sequence of events \(\mathcal{A}_n\) for \(n=1,2,\dots\) occurs \emph{with high probability} (abbreviated w.h.p.) if \(\lim_{n\rightarrow\infty}Prob(\mathcal{A}_n)=1\).

\begin{theorem}\label{thm:ERbound}
Let \(G=(V,E)\sim G(n,p)\).
\begin{itemize}
\item If \(p<1/2\) is fixed, then w.h.p.~there exists an \(\lfloor n/2\rfloor\)-subset \(A\) of \(V\) such that \(\bis{A}\) is not connected.
\item If \(p>1/2\) is fixed, then w.h.p. for every subset \(A\subset V\), \(\bis{A}\) consists of a single connected component unioned with isolated vertices.
\item Further, if \(p=1/2+\epsilon\) is fixed for some \(\epsilon>0\), then w.h.p.~for every subset \(A\subset V\) with \(||A|-n/2|<\epsilon(1/2-\epsilon)n\) we have that \(\bis{A}\) is connected and spans \(V\).
\end{itemize}
\end{theorem}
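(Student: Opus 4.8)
The plan is to treat the three bullets separately, exploiting the fact that for a fixed bipartition \((A,V\setminus A)\) the graph \(\bis A\) is exactly a bipartite Erd\H{o}s--Renyi graph on parts of sizes \(|A|\) and \(n-|A|\), with each of the \(|A|(n-|A|)\) potential cross-edges present independently with probability \(p\). For the first bullet I would produce a single good subset rather than count them. Fix a vertex \(v\); since \(\deg(v)\sim\mathrm{Bin}(n-1,p)\) has mean \((n-1)p\) and \(p<1/2\), a Chernoff bound gives that w.h.p.\ \(\deg(v)\le\lfloor n/2\rfloor-1\) (the gap from the mean up to \(n/2\) is \(\Theta(n)\)). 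On this event one can pick an \(\lfloor n/2\rfloor\)-subset \(A\) with \(v\in A\) and \(N(v)\subseteq A\): there is room since \(1+\deg(v)\le\lfloor n/2\rfloor\), and there are enough non-neighbors to fill \(A\). Then \(v\) has no cross-edge, so it is isolated in \(\bis A\), which is therefore disconnected.

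For the second bullet (the crux) I would use a union bound over partitions. If some \(\bis A\) has two nontrivial components, there is a split \(V=S\sqcup(V\setminus S)\) with no \(\bis A\)-edge across it and with each side containing a cross-edge; writing \(A_1=S\cap A\), \(B_1=S\cap(V\setminus A)\), \(A_2=(V\setminus S)\cap A\), \(B_2=(V\setminus S)\cap(V\setminus A)\), all four blocks are nonempty and the forbidden cross-edges number \(|A_1||B_2|+|A_2||B_1|\). Hence, summing over ordered partitions of \(V\) into four nonempty blocks (with \(A=A_1\cup A_2\)),
\[
\mathrm{Prob}\bigl(\exists A:\ \bis A\text{ has }\ge 2\text{ nontrivial components}\bigr)\le\sum (1-p)^{|A_1||B_2|+|A_2||B_1|}.
\]
A short optimization shows the exponent is always at least \(n-2\), but that alone is far too weak. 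The real task is to show the sum is governed by the ``small component'' configurations (one block a single isolated edge), which contribute \(\mathrm{poly}(n)\cdot\bigl(2(1-p)\bigr)^{n}\), and that the base \(2(1-p)<1\) precisely when \(p>1/2\). I expect this to be the main obstacle: balanced partitions are numerous (\(\approx 4^n\) of them) yet force a super-linear number of forbidden edges, so one must pair the multinomial count against the exponent carefully. The delicate point is to retain the constraint that all four blocks are nonempty throughout the estimate, since relaxing it (e.g.\ extending an inner binomial sum to include an empty block) reintroduces spurious \(\Theta(1)\) terms and collapses the bound.

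For the third bullet I would combine the second with a degree estimate. By the second bullet, w.h.p.\ every \(\bis A\) is one nontrivial component together with isolated vertices, so it suffices to rule out isolated vertices over the balanced range of \(A\). A vertex \(v\in A\) is isolated in \(\bis A\) exactly when \(N(v)\subseteq A\setminus\{v\}\), which is impossible once \(\deg(v)>|A|-1\). In the allowed range \(|A|\le n/2+\epsilon(1/2-\epsilon)n\), whereas \(\deg(v)\approx n(1/2+\epsilon)\), so the gap \(\deg(v)-|A|\) is \(\Theta(n)\); a Chernoff bound together with a union bound over the \(n\) vertices gives w.h.p.\ \(\deg(v)\ge n/2+\epsilon(1/2-\epsilon)n\) for every \(v\). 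On this event no vertex can be isolated (the symmetric estimate handles \(v\in V\setminus A\), where the relevant capacity is \(|V\setminus A|-1\le n/2+\epsilon(1/2-\epsilon)n-1\)), so each \(\bis A\) in the range is connected and spans \(V\).
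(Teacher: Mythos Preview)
Your treatment of the first and third bullets is essentially the same as the paper's: both rely on the degree concentration bound $\deg(v)=(1\pm o(1))pn$ (the paper states this as a lemma and uses it identically).

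For the second bullet your route diverges, and the part you flag as ``the main obstacle'' is exactly what the paper's argument avoids. You propose a single union bound over all ordered $4$-block partitions $(A_1,B_1,A_2,B_2)$ with exponent $|A_1||B_2|+|A_2||B_1|$. As you note, this requires a delicate pairing of the multinomial count against the exponent, with the dangerous regime being the one where the smaller component is of constant size (giving $\approx \mathrm{poly}(n)\,(2(1-p))^n$). You leave this analysis unfinished. The paper's proof sidesteps the whole difficulty by recycling the degree estimate: w.h.p.\ $\delta(G)\ge (1/2+\epsilon-\epsilon^2)n$, and if $v\in A$ with $|A|\le n/2$ lies in a nontrivial component of $\bis A$, then $v$ has at least $\delta-|A|\ge \epsilon(1/2-\epsilon)n$ neighbors on the other side, so that component already has size $\ge \alpha n$ for $\alpha=\alpha(\epsilon)>0$. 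In other words, degree concentration rules out \emph{deterministically} all nontrivial components of size $2$ up to $\alpha n$. The union bound is then only needed for components of size between $\alpha n$ and $2n/3$, where the number of forbidden cross-edges is $\Omega(n^2)$, so the bound $2^n\cdot 2^n\cdot (1-p)^{\Omega(n^2)}\to 0$ is immediate and requires no case analysis on block sizes.

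So the key idea you are missing is not a better estimate for the four-block sum, but the observation that the same degree bound used in bullets one and three already kills the small-component regime, collapsing the union bound to a trivial one. Your direct approach is not wrong and can be made to work, but it is considerably more laborious; the paper's two-stage argument (degrees handle small, union bound handles medium) is what makes the proof short.
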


The proof of Theorem~\ref{thm:ERbound} will require the following well-known lemma.
\begin{lemma} \label{lem:concentration}
Let \(p \in (0,1)\) be fixed, and \(G \sim G(n,p)\). Then w.h.p.\ every vertex in \(G\) has degree \(\approx pn\).
That is, for every fixed \(\epsilon > 0\), w.h.p. every vertex \(v\) has degree \((1-\epsilon) pn < deg(v) < (1 + \epsilon) pn\).
\end{lemma}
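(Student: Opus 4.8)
The plan is to prove the degree concentration in Lemma~\ref{lem:concentration} by a standard Chernoff bound on each vertex degree, followed by a union bound over the $n$ vertices. Fix $\epsilon > 0$. For a fixed vertex $v$, its degree $\deg(v)$ is a sum of $n-1$ independent Bernoulli($p$) random variables, one for each potential edge from $v$, so $\deg(v) \sim \mathrm{Binomial}(n-1, p)$ with mean $\mu := (n-1)p$. The goal is to show that the probability $\deg(v)$ deviates from $\mu$ (and hence from $pn$, up to lower-order terms) by a multiplicative factor of $\epsilon$ is exponentially small in $n$.

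First I would invoke the multiplicative Chernoff bounds: for $0 < \delta < 1$,
\begin{align}
\Pr\bigl(\deg(v) > (1+\delta)\mu\bigr) &\leq \exp\bigl(-\delta^2 \mu / 3\bigr), \\
\Pr\bigl(\deg(v) < (1-\delta)\mu\bigr) &\leq \exp\bigl(-\delta^2 \mu / 2\bigr).
\end{align}
Since $p$ is a fixed constant, $\mu = (n-1)p = \Theta(n)$, so with $\delta = \epsilon$ each bound is at most $\exp(-c\epsilon^2 n)$ for a suitable constant $c > 0$ depending only on $p$. Thus for a single vertex, $\Pr\bigl(|\deg(v) - \mu| \geq \epsilon\mu\bigr) \leq 2\exp(-c\epsilon^2 n)$.

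Next I would apply the union bound across all $n$ vertices. The probability that some vertex violates the concentration estimate is at most $n \cdot 2\exp(-c\epsilon^2 n)$, which tends to $0$ as $n \to \infty$ because the exponential decay dominates the linear factor. Hence w.h.p.\ every vertex $v$ simultaneously satisfies $(1-\epsilon)\mu < \deg(v) < (1+\epsilon)\mu$. The final bookkeeping step is to replace $\mu = (n-1)p$ by $pn$: since $(n-1)p$ and $pn$ differ by the constant $p$, one can absorb this into a slight adjustment of the tolerance, e.g.\ by running the argument with $\epsilon/2$ in place of $\epsilon$ and noting that $(1 \pm \epsilon/2)(n-1)p$ lies strictly inside $\bigl((1-\epsilon)pn, (1+\epsilon)pn\bigr)$ for all sufficiently large $n$. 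This yields the stated conclusion.

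I do not anticipate a serious obstacle here, as the result is standard; the only point requiring care is the union bound, which succeeds precisely because the per-vertex failure probability decays exponentially in $n$ while the number of vertices grows only linearly. One minor subtlety worth flagging is that the degree events across different vertices are \emph{not} independent (they share edge variables), but this is irrelevant since the union bound requires no independence. If one prefers to avoid quoting the Chernoff inequalities as a black box, an alternative is a second-moment (Chebyshev) argument using $\mathrm{Var}(\deg(v)) = (n-1)p(1-p) = \Theta(n)$, though Chebyshev alone gives only per-vertex failure probability $O(1/n)$, which is too weak to survive the union bound over $n$ vertices; hence the exponential Chernoff tail is the essential ingredient.
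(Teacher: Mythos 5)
Your proof is correct, and it is exactly the standard argument the paper implicitly relies on: the authors state Lemma~\ref{lem:concentration} as well known and give no proof, expecting precisely this Chernoff-plus-union-bound computation. Your handling of the $(n-1)p$ versus $pn$ bookkeeping and your remark that the union bound needs no independence across vertex degrees are both sound, so there is nothing to repair.
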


\begin{proof}[of Theorem~\ref{thm:ERbound}]
For the case where \(p<1/2\), note that w.h.p. every vertex has degree close to its mean \(pn\). Since \(p < 1/2\), a typical vertex \(v\) is connected to fewer than half of the other vertices.
Thus, there is some \(\lfloor n/2\rfloor\)-subset \(A\) of \(V\) containing the entire neighborhood of \(v\), and the corresponding \(\bis{A}\) is not connected.

Next, let \(p=1/2 +\epsilon\) for fixed \(\epsilon>0\). 
First, we show that for any subset \(A\subset V\), there exists a constant \(\alpha\in \R_{>0}\) depending only on \(\epsilon\) such that \(\bis{A}\) has no connected component of order \(i\) for \(2\leq i\leq \alpha n\).
Let \(\delta=\delta(G)\) denote the minimum degree of a vertex in \(G\).
By Lemma~\ref{lem:concentration}, w.h.p. \(\delta\geq \frac{(1+\epsilon)}{2}n-\epsilon^2n\).
For the bipartition \((A, V\setminus A)\), one of \(A\) and \(V\setminus A\) has order less than or equal to \(n/2\). 
Without loss of generality, suppose \(|A|\leq n/2\).
If \(S\subset V\) spans a connected component of \(\bis{A}\) and \(|S|\geq 2\), then there exists a vertex \(v\) of \(S\) such that \(v\in A\).
Then w.h.p. \(\deg(v)\geq\frac{(1+\epsilon)}{2}n-\epsilon^2n\) and the number of neighbors of \(v\) that are in \(V\setminus A\) must be at least
\[
\begin{split}
\delta-|A|&\geq \frac{(1+\epsilon)}{2}n-\epsilon^2n - n/2  \\
&=\frac{\epsilon n}{2}-\epsilon^2n\\
&=\epsilon(1/2-\epsilon)n.
\end{split}
\]
Setting \(\alpha=\frac{\epsilon}{2}(1/2-\epsilon)\), we see that w.h.p. the order of \(S\) is strictly greater than \(\alpha n\), a contradiction.

Next we rule out connected components of order \(i\) for \(\alpha n\leq i\leq 2n/3\) by bounding the probability that a given subset of \(i\) vertices spans a connected component for some bipartition \((A,V\setminus A)\).
Suppose that \(S\subset V\) with \(|S|=i\) spans a connected component of \(\bis{A}\), and let \(a=|S\cap A|\) and \(b=|S\cap (V\setminus A)|\).
Any edge between \(S\cap A\) and \((V\setminus A)\setminus S\) is forbidden.
If there was such an edge, \(S\) would not span its connected component, contradicting our assumption.
The same holds for edges between \(A\setminus S\) and \(S\cap (V\setminus A)\).
So the number of forbidden edges is \(a(n-b)+b(n-a)=(a+b)n-2ab\), and the probability that \(S\) spans its component in the given bipartition is at most
\[
(1-p)^{(a+b)n-2ab}.
\]
Since \(a+b=i\), \(ab\) is maximized when \(a=\lfloor i/2\rfloor\) and \(b=\lceil i/2\rceil\).
Thus, the number of forbidden edges is at least 
\[
in-2(i/2)^2=i(n-i/2)\geq (\alpha n)(n-n/3)=\frac{2\alpha}{3} n^2.
\]

Applying a union bound, the probability that there is any connected component of size \(\alpha n\leq i \leq \frac{2n}{3}\) in any bipartition is bounded above by 
\[
\sum_{k=1}^{n-1}\binom{n}{k}\sum_{i=\alpha n}^{2n/3}\binom{n}{i}(1-p)^{\frac{2\alpha}{3} n^2}\leq 2^{n}\cdot 2^{n}(1-p)^{\frac{2\alpha}{3} n^2}\rightarrow 0
\]
as \(n\rightarrow \infty\) since \(p\in (0,1)\) is fixed. 
So w.h.p. \(\bis{A}\) consists of a single large component of order at least \(2n/3\) unioned with isolated vertices.

Now we show that if \(||A|-n/2|< \epsilon(1/2-\epsilon)n\), then w.h.p. \(\bis{A}\) has no isolated vertices. 
We know that for \(p=1/2 + \epsilon\) we have \(\delta\geq\frac{(1+\epsilon)}{2}n-\epsilon^2n\).
For a bipartition \((A,V\setminus A)\), one of the shores has size greater than \(n/2\); we have already seen that if \(|A|<n/2\), w.h.p. there are no isolated vertices in \(\bis{A}\) contained in \(A\).
Thus, we can assume that \(|A|>n/2\) and assume further that \(|A|-n/2< \epsilon(1/2-\epsilon)n\).
It follows that
\[
|A|<\frac{(1+\epsilon)}{2}n-\epsilon^2n=\delta
\]
and thus \(\delta-|A|>0\).
Hence, w.h.p. every vertex in \(A\) must have a neighbor in \(V\setminus A\) and thus there are no isolated vertices in \(\bis{A}\).
\end{proof}

Observe that every facet subgraph must support the two facet-defining functions taking values in \(\{0,1\}\), namely those two $0/1$-functions that are constant on the shores of the bipartition induced by $G_f$.
Further, any $0/1$-function on the vertices of $G$ is a facet-defining function for the symmetric edge polytope of the complete graph on those vertices.
Thus, Theorem~\ref{thm:ERbound} shows that when \(p>1/2\) and \(n\) is large, there are many symmetric pairs of facet-supporting hyperplanes that are identical for \(G\sim G(n,p)\) and \(K_n\).
The corresponding facets might not have the same polyhedral structure in the two symmetric edge polytopes, but the facet-supporting hyperplanes are the same.
This leads to the following corollary.

\begin{corollary}
\label{cor:facetbound}
Let \(G=(V,E)\sim G(n,p)\).
For fixed \(p=1/2+\epsilon\) with \(\epsilon>0\), let $t(n,p)$ denote the number of subsets $A\subset V$ satisfying \(||A|-n/2|<\epsilon(1/2-\epsilon)n\).
Then w.h.p. we have that the number of facets of $\PG$ is at least $t(n,p)$, i.e., as $n\to \infty$ the probability that $G\sim G(n,p)$ yields this number of facets goes to $1$.
\end{corollary}

\begin{example}
    Figure~\ref{fig:P_3 and K_3 SEPs} shows two graphs, a path with 3 vertices and a complete graph with 3 vertices, and their symmetric edge polytopes. 
    The bold pairs of facets are supported by the same pair of hyperplanes, in particular, the hyperplanes \(x_v=1\) and \(x_u+x_w=1\), arising from the partition \((\{u,w\}, \{v\})\) of the vertices.
    Since the path graph does not contain the edge \(uw\), it is not necessary that the facet-defining functions be constant on the shores of this partition.
    Therefore, this same partition also determines the other pair of facets for the path graph, supported by the hyperplanes \(x_u-x_w=1\) and \(x_w-x_u=1\), which are not support hyperplanes for \(K_3\).
\end{example}

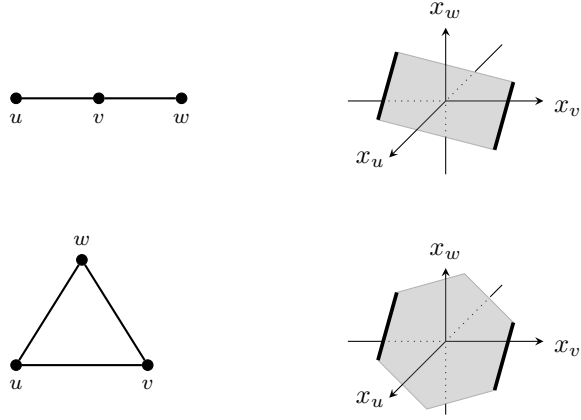
\begin{figure}[h!]
    \centering

\begin{tikzpicture}
 
    \begin{scope}[scale=0.55, xshift=0, yshift=120]
	\vertex[fill,label=below:\footnotesize{$u$}](a1) at (0,0) {};
	\vertex[fill,label=below:\footnotesize{$v$}](a2) at (2,0) {};
	\vertex[fill,label=below:\footnotesize{$w$}](a3) at (4,0) {};

	\draw[thick] (a1)--(a2);
	\draw[thick] (a2)--(a3);

    \end{scope}

 \begin{scope}[scale=0.65, xshift=250, yshift=100]
\draw[thin] (0,0,-3)--(0,0,-1.6); 
\draw[thin,dotted] (0,0,-1.6)--(0,0,0); 
\draw[thin, -stealth] (0,0,0)--(0,0,3); 

\draw[thin] (0,-1.5,0)--(0,-.75,0);
\draw[thin, dotted] (0,-.75,0)--(0,0);
\draw[thin, -stealth] (0,0,0)--(0,1.5,0);

\draw[thin] (-2,0,0)--(-1.3,0,0);
\draw[thin, dotted] (-1.3,0,0)--(0,0,0);
\draw[thin, -stealth] (0,0,0)--(2,0,0);

\coordinate[label=$x_u$] (xu) at (0,0,4);
\coordinate[label=$x_v$] (xv) at (2.5,-0.5,0);
\coordinate[label=$x_w$] (xw) at (0,1.5,0);

 \coordinate[] (u-v) at (-1,0,1);
 \coordinate[] (v-u) at (1,0,-1);
 \coordinate[] (v-w) at (1,-1,0);
 \coordinate[] (w-v) at (-1,1,0);

  \draw[fill=gray,opacity=0.3] (u-v)--(w-v)--(v-u)--(v-w)--(u-v);
  \draw[line width=1.4pt, opacity=1] (u-v)--(w-v);
  \draw[line width=1.4pt, opacity=1] (v-u)--(v-w);
  
\end{scope}

    \begin{scope}[scale=0.35, xshift=0, yshift=-100]
	\vertex[fill,label=below:\footnotesize{$u$}](a1) at (0,0) {};
	\vertex[fill,label=below:\footnotesize{$v$}](a2) at (5,0) {};
	\vertex[fill,label=above:\footnotesize{$w$}](a3) at (2.5,4) {};

	\draw[thick] (a1)--(a2);
	\draw[thick] (a2)--(a3);
	\draw[thick] (a1)--(a3);

    \end{scope}

 \begin{scope}[scale=0.65, xshift=250, yshift=-40]
\draw[thin] (0,0,-3)--(0,0,-2.4); 
\draw[thin,dotted] (0,0,-2.4)--(0,0,0); 
\draw[thin, -stealth] (0,0,0)--(0,0,3); 

\draw[thin] (0,-1.5,0)--(0,-1.3,0);
\draw[thin, dotted] (0,-1.3,0)--(0,0);
\draw[thin, -stealth] (0,0,0)--(0,1.5,0);

\draw[thin] (-2,0,0)--(-1.3,0,0);
\draw[thin, dotted] (-1.3,0,0)--(0,0,0);
\draw[thin, -stealth] (0,0,0)--(2,0,0);

\coordinate[label=$x_u$] (xu) at (0,0,4);
\coordinate[label=$x_v$] (xv) at (2.5,-0.5,0);
\coordinate[label=$x_w$] (xw) at (0,1.5,0);

 \coordinate[] (u-v) at (-1,0,1);
 \coordinate[] (v-u) at (1,0,-1);
 \coordinate[] (v-w) at (1,-1,0);
 \coordinate[] (w-v) at (-1,1,0);
 \coordinate[] (u-w) at (0,-1,1);
 \coordinate[] (w-u) at (0,1,-1);

  \draw[fill=gray,opacity=0.3] (u-v)--(w-v)--(w-u)--(v-u)--(v-w)--(u-w)--(u-v);
    \draw[line width=1.4pt, opacity=1] (u-v)--(w-v);
  \draw[line width=1.4pt, opacity=1] (v-u)--(v-w);
\end{scope}

\end{tikzpicture}
    
    \caption{A path graph \(P_3\) (top) and the complete graph \(K_3\) (bottom) with their symmetric edge polytopes. Facets arising from the same pair of support hyperplanes are bold.}
    \label{fig:P_3 and K_3 SEPs}
\end{figure}

The next corollary shows that with high probability, there is at least one \(G_f\) in \(G\sim G(n,1/2+\epsilon)\) that supports for \(\PG\) only the facet-supporting hyperplanes that the same bipartition supports for \(P_{K_n}\).

\begin{corollary}
\label{cor:Gfcentral01}
Let \(G_n=(V_n,E)\sim G(n,p)\).
If \(p=1/2+\epsilon\) is fixed with \(\epsilon>0\), then for a sequence of subsets \(A_n\subset V_n\) with \(||A_n|-n/2|<\epsilon(1/2-\epsilon)n\), w.h.p. there are only two facet-defining functions \(f\) of \(P_{G_n}\) with \(G_f=\bis{A_n}\), namely the functions \(f\) with values in \(\{0,1\}\).
\end{corollary}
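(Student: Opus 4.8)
The plan is to combine the third bullet of Theorem~\ref{thm:ERbound} with an elementary analysis of which integer-valued functions $f$ can produce $G_f=\bis{A_n}$, supplemented by the observation that the two subgraphs induced on the shores are themselves dense and hence connected with high probability.

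First I would record the easy direction. By the third bullet of Theorem~\ref{thm:ERbound}, w.h.p.\ $\bis{A_n}$ is connected and spans $V_n$. Consequently the two $\{0,1\}$-valued functions that are constant on each shore of the bipartition $(A_n,V_n\setminus A_n)$ satisfy both conditions of Theorem~\ref{thm:facetdescription}: condition (i) holds because every edge has endpoints whose values differ by $0$ or $1$, and condition (ii) holds because $E_f$ is exactly the edge set of $\bis{A_n}$, which is connected and spanning. Thus these two functions are genuine facet functions with $G_f=\bis{A_n}$, giving the ``at least two'' part of the statement.

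The content is the reverse inclusion: no other facet function $f$ has $G_f=\bis{A_n}$. The extra ingredient here is that w.h.p.\ the induced subgraphs $G[A_n]$ and $G[V_n\setminus A_n]$ are each connected. Since $A_n$ and $V_n\setminus A_n$ are fixed vertex sets, the subgraph induced on each is distributed as $G(|A_n|,p)$ and $G(n-|A_n|,p)$ respectively, and because $\bigl||A_n|-n/2\bigr|<\epsilon(1/2-\epsilon)n$ forces both shores to have order a fixed positive fraction of $n$ tending to infinity, the standard fact that $G(m,p)$ with $p$ fixed is connected w.h.p.\ applies to each. I would then argue deterministically on this event: suppose $f$ is any facet function with $G_f=\bis{A_n}$. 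Every edge inside $A_n$ lies outside $E_f$, so by condition (i) of Theorem~\ref{thm:facetdescription} its endpoints receive equal values; connectivity of $G[A_n]$ then forces $f$ to be constant on $A_n$, and likewise $f$ is constant on $V_n\setminus A_n$. Any edge crossing the bipartition lies in $E_f=E(\bis{A_n})$, so the two constants differ by exactly $1$. Modulo the constant-shift identification of facet functions noted after Theorem~\ref{thm:facetdescription}, there are exactly two such functions, namely the two $\{0,1\}$-labelings.

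The main obstacle is the probabilistic step establishing connectivity of both induced shore subgraphs, which is precisely what upgrades ``$\bis{A_n}$ is a connected spanning bipartite subgraph'' to ``the bipartition is rigidly reconstructed by $f$.'' This step is not hard given the constant-fraction lower bound on each shore's size, but it is the essential new input beyond Theorem~\ref{thm:ERbound}; the remainder is a deterministic consequence of the facet characterization in Theorem~\ref{thm:facetdescription}.
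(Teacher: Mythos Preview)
Your proposal is correct and follows essentially the same approach as the paper's proof: establish that the induced subgraphs \(G[A_n]\) and \(G[V_n\setminus A_n]\) on the two shores are each connected w.h.p.\ (since each is an Erd\H{o}s--R\'enyi random graph on a linear-in-\(n\) vertex set with fixed \(p\)), and then deduce from Theorem~\ref{thm:facetdescription} that any facet function \(f\) with \(G_f=\bis{A_n}\) must be constant on each shore. Your write-up is in fact somewhat more explicit than the paper's in spelling out why edges internal to a shore force equal \(f\)-values and in verifying that both shores have size a fixed positive fraction of \(n\).
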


\begin{proof}
First note that as \(n\to \infty\), w.h.p. the induced subgraph of \(G\) on a given subset of vertices \(A\) with \(|A|>((1+\epsilon)/2-\epsilon^2)n\) is connected.
To see this, note that since \(\epsilon\) is fixed, we have \(((1+\epsilon)/2-\epsilon^2)n\) goes to infinity and thus \(p=1/2+\epsilon>\log(((1+\epsilon)/2-\epsilon^2)n)/((1+\epsilon)/2-\epsilon^2)n\).
Thus, the induced subgraph is \(G(((1+\epsilon)/2-\epsilon^2)n,p)\) and, by a well-known theorem of Erd\H{o}s and Renyi~\cite{erdosrenyi1960}, w.h.p. this is connected.
Since both \(\bis{A}\) and \(\bis{V\setminus A}\) are connected w.h.p. for any \(A\) with \(||A|-n/2|<\epsilon(1/2-\epsilon)n\), it follows from Theorem~\ref{thm:facetdescription} that any \(f\) with \(G_f=\bis{A}\) is constant on both \(A\) and \(V\setminus A\).
\end{proof}

\begin{example}
    The graph \(G\) in Figure~\ref{fig:K4 minus an edge} is the subgraph of the complete graph \(K_4\) obtained by removing the edge \(vw\). 
    The bipartition \((\{t,\, v\}, \{u,\,w\})\) of the vertices of \(G\) produces the facet subgraph \(G_f\). 
    Since the edges \(tv\) and \(uw\) are present in the original graph \(G\), Theorem~\ref{thm:facetdescription} gives that any facet-defining function must be constant on the shores of this bipartition. 
    In particular, the only facet-supporting hyperplanes of \(\PG\) that arise from this partition are \(x_t+x_v=1\) and \(x_u+x_w=1\), which are exactly the facet-supporting hyperplanes for \(P_{K_4}\) associated with the same partition.
\end{example}

\begin{figure}[h]
    \centering
    \begin{tikzpicture}

\begin{scope}[scale=1, xshift=0, yshift=0]
	\vertex[fill, label=$t$](v1) at (0,0) {};
	\vertex[fill, label=$u$](v2) at (-1,1) {};
	\vertex[fill, label=below:$v$](v3) at (-1,-1) {};	
	\vertex[fill, label=$w$](v4) at (-2,0) {};

	\draw[thick] (v1)--(v2);
	\draw[thick] (v1)--(v3);
	\draw[thick] (v1)--(v4);	
	\draw[thick] (v2)--(v3);
	\draw[thick] (v2)--(v4);

 \node[] at (-2,1.5) {$G$};
    
    \end{scope}

\begin{scope}[scale=1, xshift=100, yshift=0]
	\vertex[fill, label=$t$](v1) at (0,0) {};
	\vertex[fill, label=$u$](v2) at (-1,1) {};
	\vertex[fill, label=below:$v$](v3) at (-1,-1) {};	
	\vertex[fill, label=$w$](v4) at (-2,0) {};

	\draw[thick] (v1)--(v2);
	\draw[thick] (v1)--(v4);	
	\draw[thick] (v2)--(v3);

     \node[] at (-2,1.5) {$G_f$};
    
    \end{scope}
\end{tikzpicture}
    \caption{The graph \(G\) is formed by removing the edge \(vw\) from the complete graph \(K_4\). The graph \(G_f\) is the facet subgraph of \(G\) associated to the partition \((\{t,\, v\}, \{u,\,w\})\) of the vertices.}
    \label{fig:K4 minus an edge}
\end{figure}
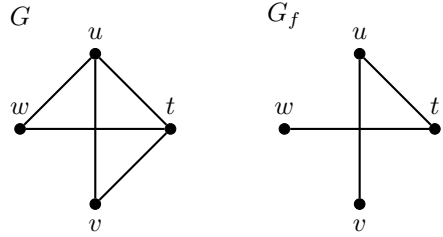

\subsection{Data and Observations}
While Theorem~\ref{thm:ERbound} establishes the existence of many facet subgraphs in certain large random graphs, this does not provide much insight into our consideration of average local clustering.
When we direct our attention to empirical data for \(G(n,p)\), no apparent correlation between average local clustering and \(\facets(G)\) is observed, as demonstrated in Figure~\ref{fig: 14vert gnp 5000} for an ensemble sampled from \(G(14,0.45)\).
This is in stark contrast to the data for all connected graphs on eight vertices shown in Figure~\ref{fig:8allconnected}.
One caveat is that there are $29,003,487,462,848,061$ connected graphs on 14 vertices~\cite[A001349]{OEIS}, of which we sample only 4975.
Figure~\ref{fig:table of plots} provides further evidence that sampling from $G(n,p)$ does not yield a consistent trend.
This figure contains a table of plots providing data for ensembles of graphs with \(n=11,14,17\) and \(p=0.2, 0.4, 0.6, 0.8\). 

\begin{figure}[ht]
    \centering
    \includegraphics[width=0.5\textwidth]{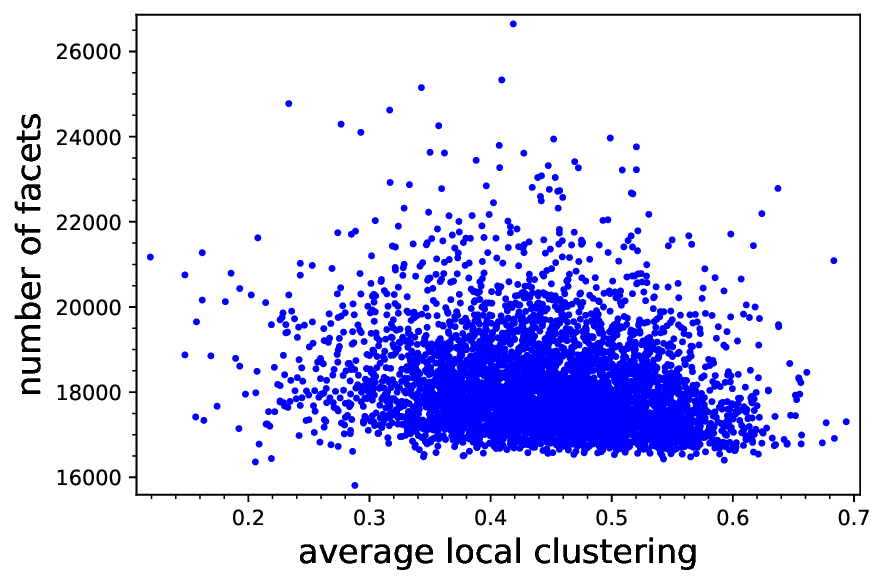}
    \caption{Data from an ensemble of 4975 connected graphs from \(G(14,0.45)\).}
    \label{fig: 14vert gnp 5000}
\end{figure}

\begin{figure}
\centering
\subfloat[$n=11$, $p=0.2$]{ \includegraphics[scale=0.3]{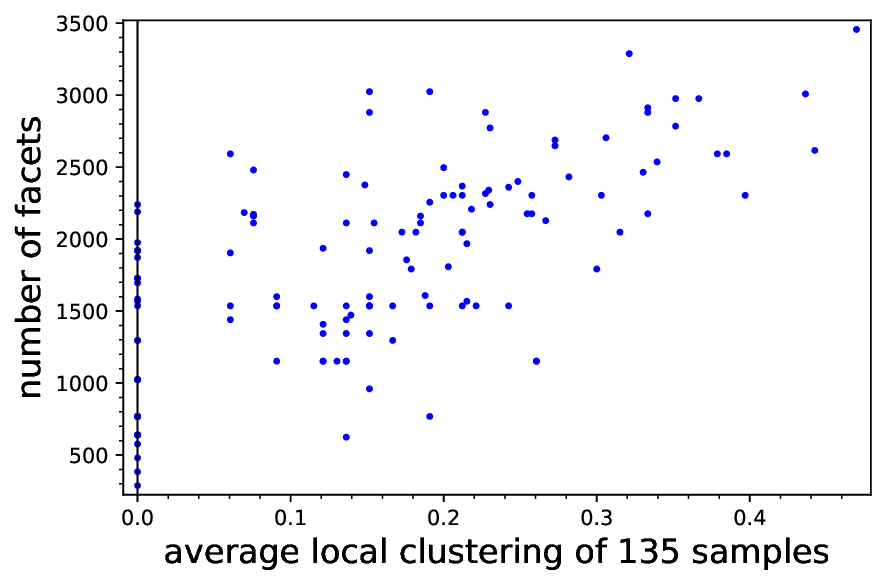}}\hfill
\subfloat[$n=14$, $p=0.2$]{\includegraphics[scale=0.3]{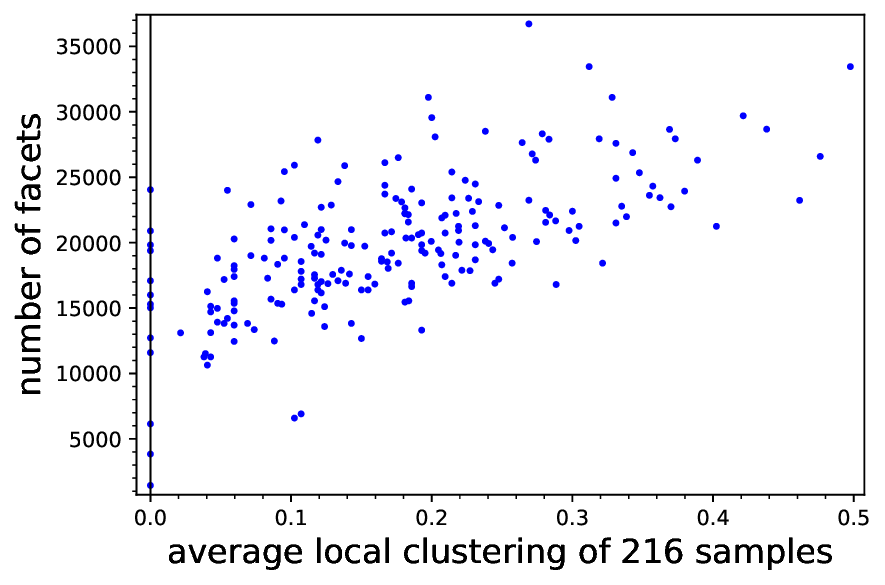}}\hfill
\subfloat[$n=17$, $p=0.2$]{\includegraphics[scale=0.3]{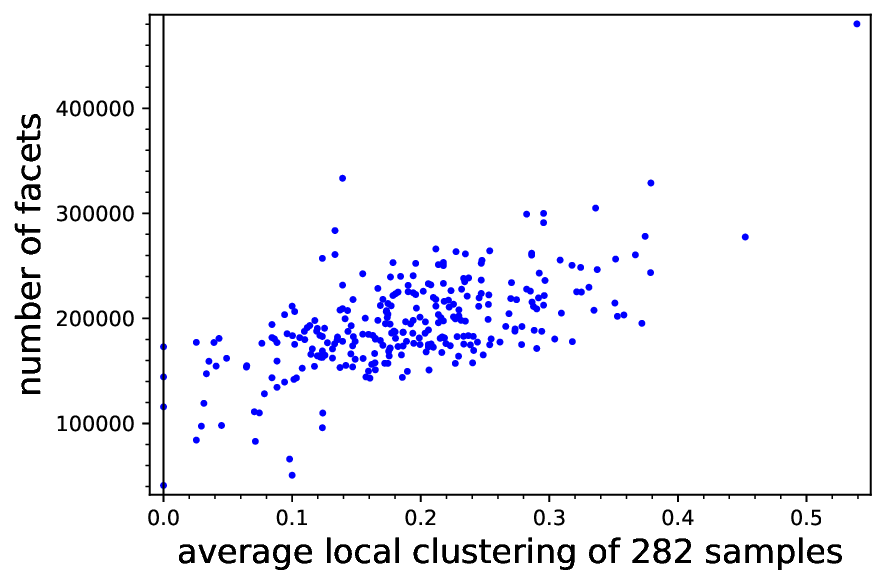}}\\

\subfloat[$n=11$, $p=0.4$]{\includegraphics[scale=0.3]{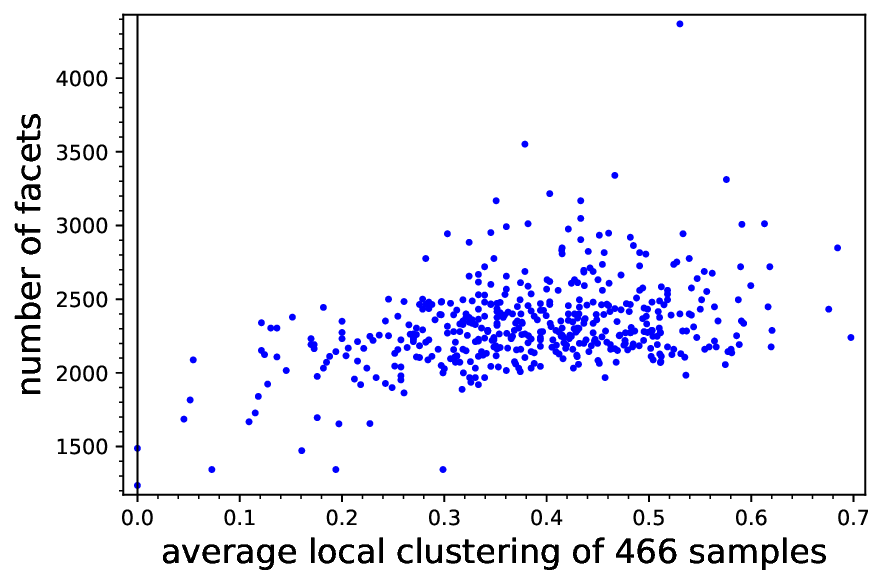}}\hfill
\subfloat[$n=14$, $p=0.4$]{\includegraphics[scale=0.3]{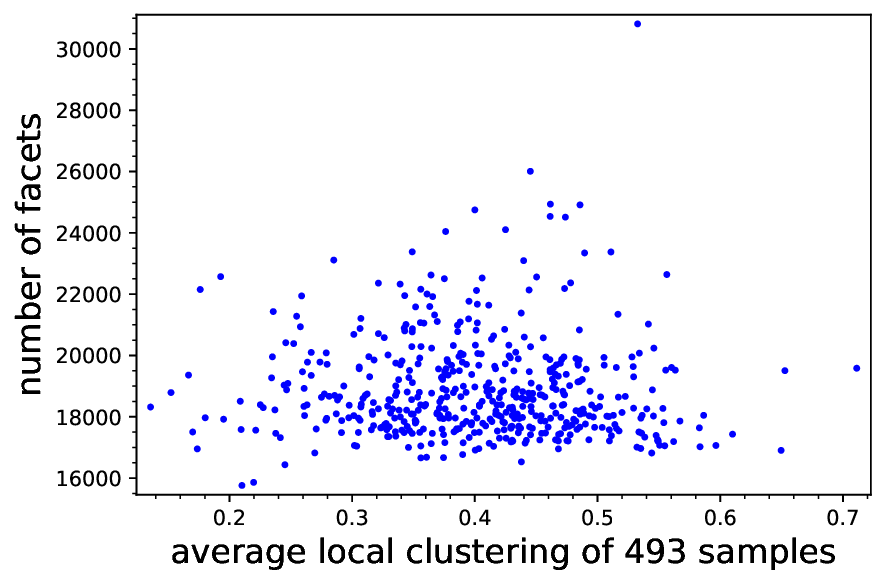}}\hfill
\subfloat[$n=17$, $p=0.4$]{\includegraphics[scale=0.3]{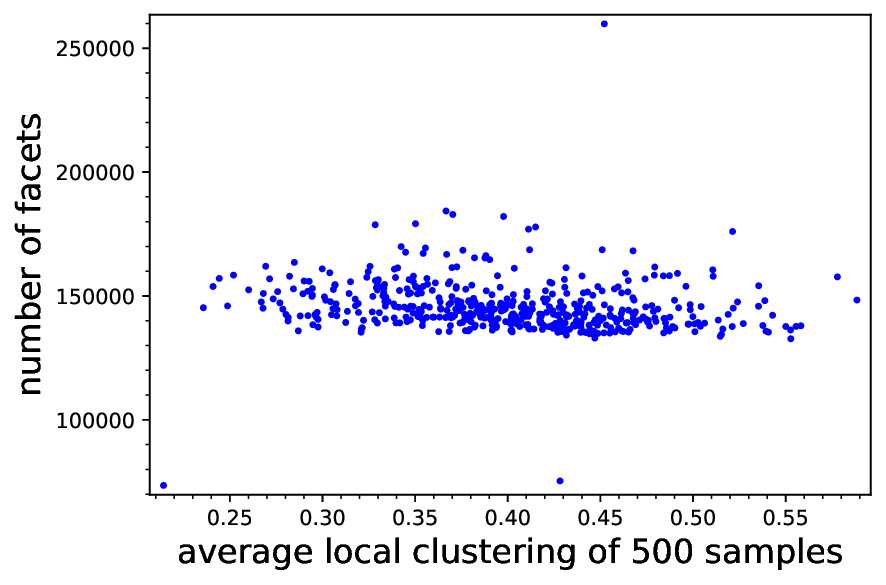}}\\

\subfloat[$n=11$, $p=0.6$]{\includegraphics[scale=0.3]{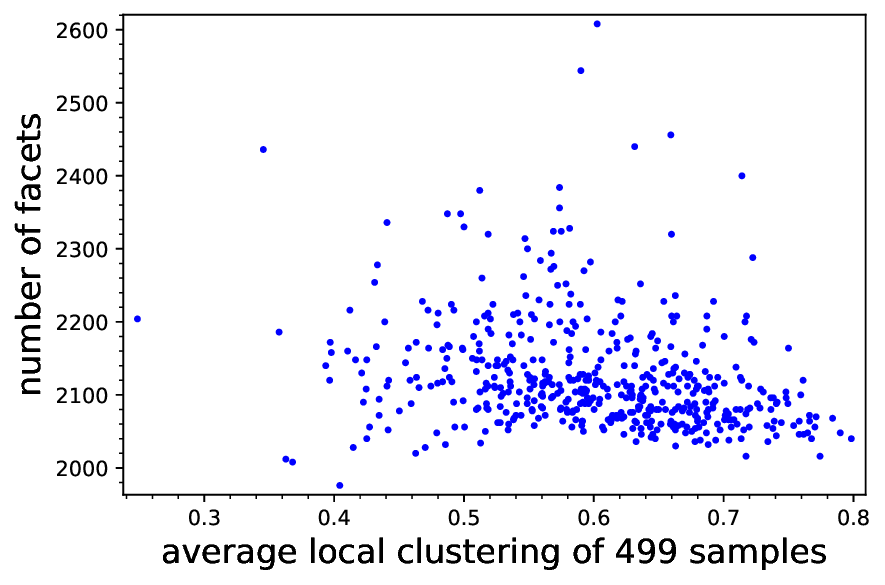}}\hfill
\subfloat[$n=14$, $p=0.6$]{\includegraphics[scale=0.3]{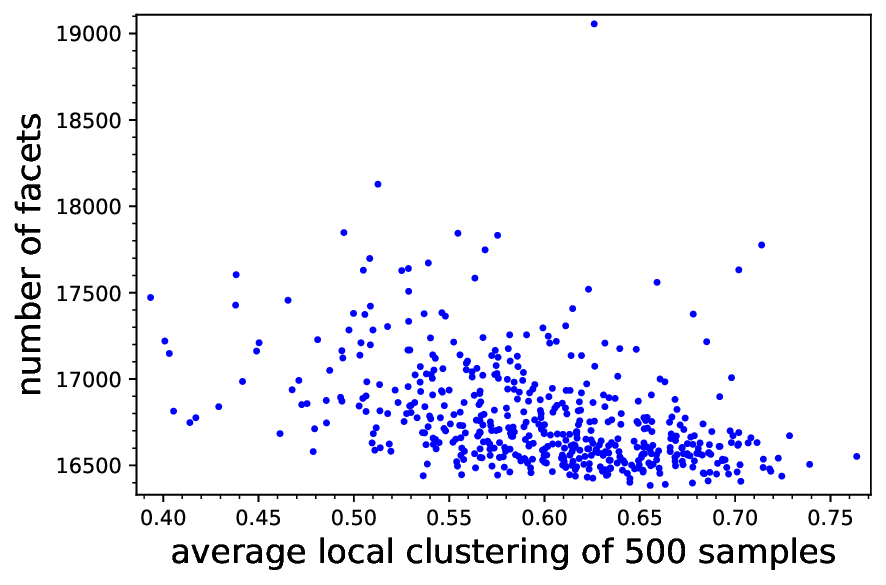}}\hfill
\subfloat[$n=17$, $p=0.6$]{\includegraphics[scale=0.3]{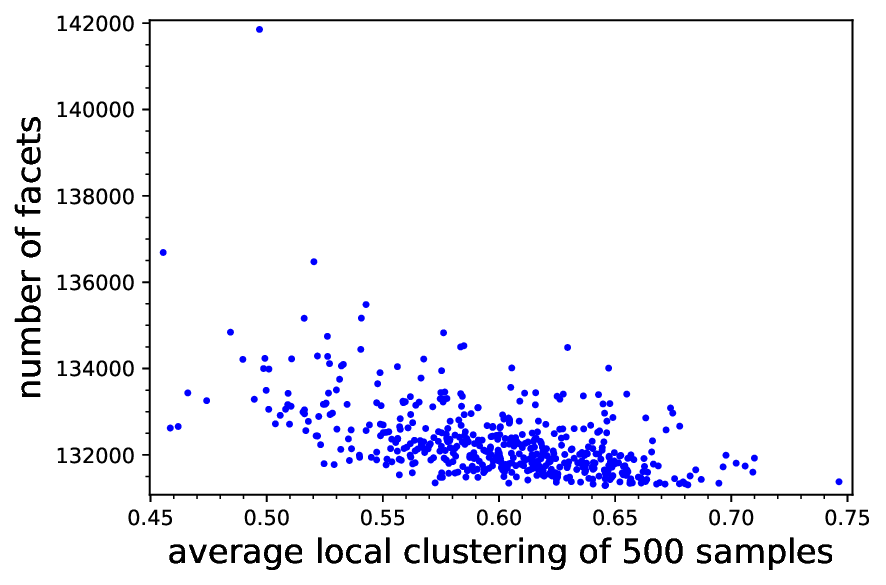}}\\

\subfloat[$n=11$, $p=0.8$]{\includegraphics[scale=0.3]{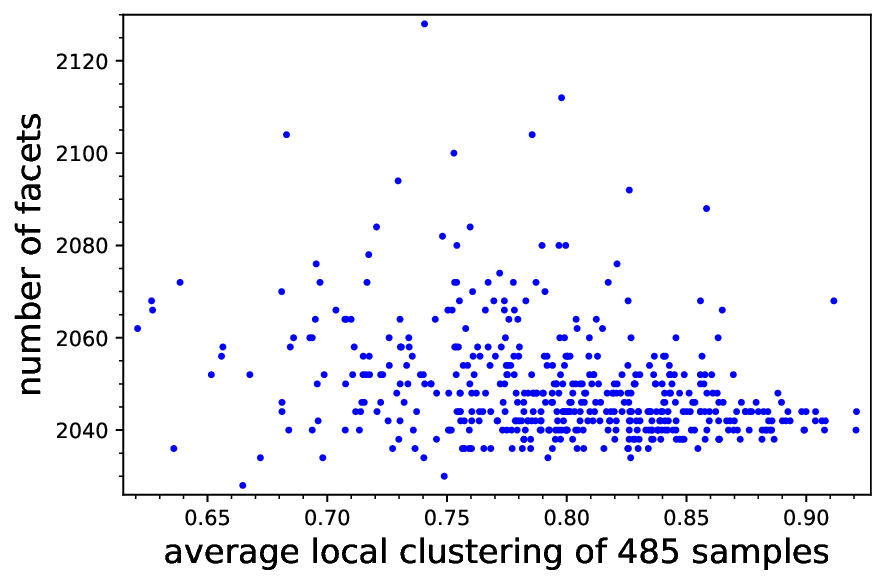}}\hfill
\subfloat[$n=14$, $p=0.8$]{\includegraphics[scale=0.3]{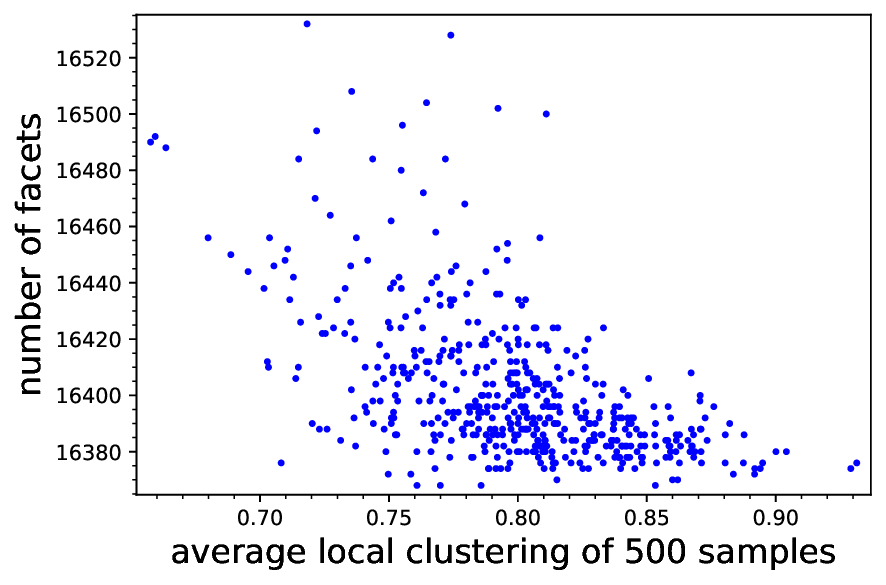}}\hfill
\subfloat[$n=17$, $p=0.8$]{\includegraphics[scale=0.3]{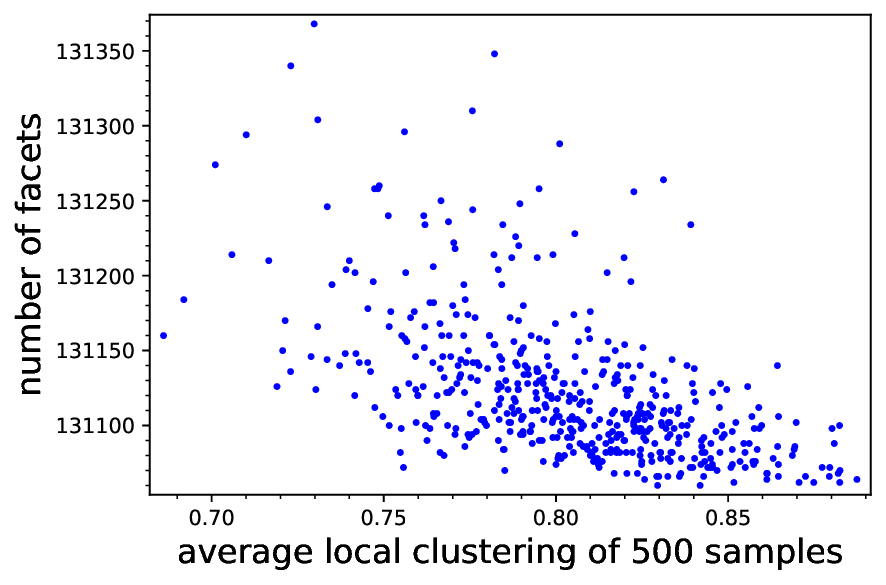}}\\

    \caption{Data from ensembles drawn from \(G(n,p)\).
    The target sample size in each ensemble was 500 connected graphs and disconnected graphs were rejected during sampling.
    Note that for smaller values of \(p\), the range of the vertical axis is significantly larger than for large values of \(p\).}
    \label{fig:table of plots}
\end{figure}

In Figure~\ref{fig:table of plots}, the number of vertices increases across rows while the value of \(p\) increases down columns.
As expected, the mean value of sampled \(\Cws\) values is approximately \(p\).
Further, reading down each column as \(p\) increases, we observe the range of values of \(\facets(G)\) in our sample becomes smaller. 
Specifically, these values are getting closer to \(\facets(K_n)=2^n-2\), where this formula is a straightforward application of Theorem~\ref{thm:facetdescription}. 
This makes sense, as higher values of $p$ leads to greater edge density, and thus the typical sampled graph is closer in structure to $K_n$.

What we do not see consistently in these samples from $G(n,p)$ is a positive correlation between average local clustering and number of facets that is observed in Figure~\ref{fig:8allconnected}.
For $p=0.2$, the first row in Figure~\ref{fig:table of plots}, we do observe a positive trend.
We also observe that when $p=0.2$, there is significantly more variation in the number of facets that arise; note that for $n=17$, the range of the vertical axis when $p=0.2$ is from less than $100,000$ to over $400,000$.
However, for $n=17$ and $p=0.8$, the range is from around $131,000$ to around $131,400$.
Note that $N(K_{17})=131,070$.

What we will see in Section~\ref{sec:MCMC} is that when we sample graphs on $n$ vertices in a more restrictive fashion, fixing also the number of edges or (more strongly) the degree sequence, we do observe a positive correlation between average local clustering and number of facets.

\section{Graph Ensembles via Markov Chain Monte Carlo Sampling}\label{sec:MCMC}

 While our samples from \(G(n,p)\) for fixed \(p\) do not display the relationship between average local clustering and \(\facets(G)\) that was observed in the complete enumeration for small \(n\) (as in Figure~\ref{fig:8allconnected}), we do see correlations when we use other random graph models that differently restrict the space of graphs we consider.
This portion of our study uses Markov Chain Monte Carlo (MCMC) techniques to generate ensembles of connected graphs having either a fixed number of edges or a fixed degree sequence.
These techniques arise in the study of configuration models for random graphs with a fixed degree sequence, see the survey~\cite{SIAMMCMC} and the references given there for more details.

\subsection{MCMC Sampling Methods}\label{subsec:sample_edge_only}

Sampling from graph spaces via Markov chain traversal is a common technique~\cite{SIAMMCMC}. 
For a Markov chain having a certain stationary distribution, sample graphs taken at sufficiently spaced intervals can be treated as independent, and an ensemble of such graphs can be expected to follow the stationary distribution. 
In the case of our study, we employ Markov chains for which the stationary distribution is uniform arising from processes to produce new graphs from old by local, reversible operations. 
With this, we can picture the sample space as a graph of graphs \(\mathcal{G}\), where each node represents a graph in the space, and there is a directed edge from the graph \(G\) to the graph \(G'\) if performing an instance of the transition operation on \(G\) produces \(G'\). 
In general, each edge has a weight signifying the probability of that transition. 
In our study, all instances of the transition operation are equally likely. 
That is, from a state, \(G\), in the Markov chain, the probability of transitioning to any adjacent state is equal. 
So we can view edges in the graph of graphs for our spaces as unweighted.

\subsubsection{Fixed Number of Edges}

To sample from the space of connected graphs with \(n\) vertices and \(m\) edges, we employ a single-edge replacement MCMC technique similar to what is described in~\cite[Section 2]{SIAMMCMC} (in the case of trees, this technique is known as branch exchange).
Define the graph of graphs \(\G(n,m)\) to be the directed graph with vertex set all connected graphs with \(n\) vertices and \(m\) edges.
A connected graph \(G\) has an arrow in \(\G(n,m)\) to a connected graph \(G'\) if \(G'\) is obtained from \(G\) by deleting an edge in \(G\) and adding an edge in \(G'\) from the complement of \(G\). 
This process is demonstrated in Figure~\ref{fig:single edge replacement}.

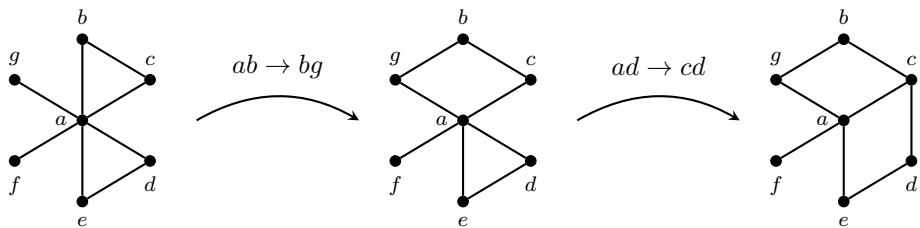
\begin{figure}[h]
    \centering

 \begin{tikzpicture}[scale=0.9]
    
    \begin{scope}[scale=.8, xshift=0, yshift=0]
	\vertex[fill, label=left:\footnotesize{$a$}](v1) at (0,0) {};
	\vertex[fill, label=above:\footnotesize{$b$}](v2) at (0,1.5) {};
	\vertex[fill, label=above:\footnotesize{$c$}](v3) at (1.25,.75) {};
	\vertex[fill, label=below:\footnotesize{$d$}](v4) at (1.25,-.75) {};
	\vertex[fill, label=below:\footnotesize{$e$}](v5) at (0,-1.5) {};	
	\vertex[fill, label=below:\footnotesize{$f$}](v6) at (-1.25,-.75) {};
	\vertex[fill, label=above:\footnotesize{$g$}](v7) at (-1.25,.75) {};

	\draw[thick] (v1)--(v2);
	\draw[thick] (v1)--(v3);
	\draw[thick] (v1)--(v4);
	\draw[thick] (v1)--(v5);	
	\draw[thick] (v1)--(v6);
	\draw[thick] (v1)--(v7);
	\draw[thick] (v2)--(v3);
	\draw[thick] (v4)--(v5);
  
    \end{scope}

    \begin{scope}[scale=.8,xshift=60, yshift=0]
     \draw[thick, -stealth] (0,0) to[out=30,in=150] (3,0);
          \node[] at (1.5,1) {$ab\rightarrow bg$};
    \end{scope}

    \begin{scope}[scale=.8, xshift=200, yshift=0]
	\vertex[fill, label=left:\footnotesize{$a$}](v1) at (0,0) {};
	\vertex[fill, label=above:\footnotesize{$b$}](v2) at (0,1.5) {};
	\vertex[fill, label=above:\footnotesize{$c$}](v3) at (1.25,.75) {};
	\vertex[fill, label=below:\footnotesize{$d$}](v4) at (1.25,-.75) {};
	\vertex[fill, label=below:\footnotesize{$e$}](v5) at (0,-1.5) {};	
	\vertex[fill, label=below:\footnotesize{$f$}](v6) at (-1.25,-.75) {};
	\vertex[fill, label=above:\footnotesize{$g$}](v7) at (-1.25,.75) {};

	\draw[thick] (v7)--(v2);
	\draw[thick] (v1)--(v3);
	\draw[thick] (v1)--(v4);
	\draw[thick] (v1)--(v5);	
	\draw[thick] (v1)--(v6);
	\draw[thick] (v1)--(v7);
	\draw[thick] (v2)--(v3);
	\draw[thick] (v4)--(v5);
  
    \end{scope}

    \begin{scope}[scale=.8,xshift=260, yshift=0]
     \draw[thick, -stealth] (0,0) to[out=30,in=150] (3,0);  
     \node[] at (1.5,1) {$ad\rightarrow cd$};
    \end{scope}

    \begin{scope}[scale=.8, xshift=400, yshift=0]
	\vertex[fill, label=left:\footnotesize{$a$}](v1) at (0,0) {};
	\vertex[fill, label=above:\footnotesize{$b$}](v2) at (0,1.5) {};
	\vertex[fill, label=above:\footnotesize{$c$}](v3) at (1.25,.75) {};
	\vertex[fill, label=below:\footnotesize{$d$}](v4) at (1.25,-.75) {};
	\vertex[fill, label=below:\footnotesize{$e$}](v5) at (0,-1.5) {};	
	\vertex[fill, label=below:\footnotesize{$f$}](v6) at (-1.25,-.75) {};
	\vertex[fill, label=above:\footnotesize{$g$}](v7) at (-1.25,.75) {};

	\draw[thick] (v7)--(v2);
	\draw[thick] (v1)--(v3);
	\draw[thick] (v3)--(v4);
	\draw[thick] (v1)--(v5);	
	\draw[thick] (v1)--(v6);
	\draw[thick] (v1)--(v7);
	\draw[thick] (v2)--(v3);
	\draw[thick] (v4)--(v5);
  
    \end{scope}
    
    \end{tikzpicture}

    \caption{An example of a walk through \(\mathcal{G}(7,8)\) demonstrating a sequence of two possible single-edge replacements, first replacing \(ab\) with \(bg\), then replacing \(ad\) with \(cd\).}
    \label{fig:single edge replacement}
\end{figure}

Thus, note that every arrow in \(\G(n,m)\) is reversible.
Further, for an edge \(e\in G\) and \(f\) in the complement of \(G\), if the edge set \((E(G)\setminus\{e\})\cup\{f\}\) does not form a connected graph, define \(\G(n,m)\) to have a loop at \(G\).
It is straightforward to show that the directed graph \(\G(n,m)\) is regular, strongly connected, and aperiodic, hence we can conclude that ensembles generated by this method asymptotically obey a uniform distribution~\cite{SIAMMCMC}.

Our sampling method begins by generating a random element \(G\) of \(\G(n,m)\) and then successively randomly choosing an edge \(e\in E(G)\) and a non-edge \(f\in E(G)^C\) to generate the next step in a random walk on the graph of graphs.
We use subsampling, typically taking every 11-th graph, in an attempt to generate an ensemble of graphs with more diverse structures, though, due to computational constraints, our sample sizes are not particularly large.
Note that we are not selecting our subsampling frequency based on any information about the target distribution or mixing time.

\subsubsection{Fixed Degree Sequence}

To sample from the space of simple connected graphs on \(n\) vertices with a fixed degree sequence, we employ a double-edge swap MCMC technique as described in~\cite[Section~2]{SIAMMCMC}.
Define the graph of graphs \(\mathcal{G}(\mathbf{d})\) to be the directed graph with vertex set all connected graphs with degree sequence \(\mathbf{d}\).
A connected graph \(G\) has an arrow to \(G'\) in \(\mathcal{G}(\mathbf{d})\) if \(G'\) is obtained from \(G\) via a double-edge swap, i.e., if there exist edges \(uv\) and \(xy\) in \(G\) such that replacing these edges with \(ux\) and \(vy\) produces \(G'\). 
An example is shown in Figure~\ref{fig:double edge swap}.

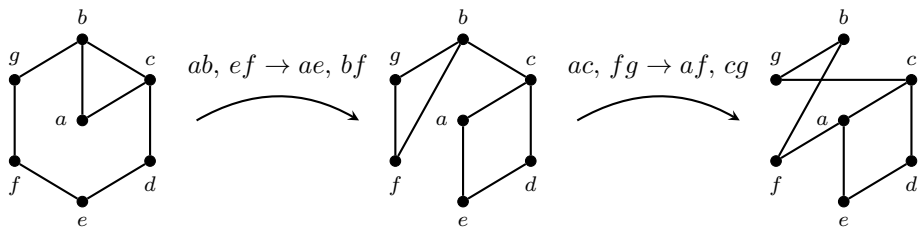
\begin{figure}[h!]
    \centering

 \begin{tikzpicture}[scale=0.9]
    
    \begin{scope}[scale=.8, xshift=0, yshift=0]
	\vertex[fill, label=left:\footnotesize{$a$}](v1) at (0,0) {};
	\vertex[fill, label=above:\footnotesize{$b$}](v2) at (0,1.5) {};
	\vertex[fill, label=above:\footnotesize{$c$}](v3) at (1.25,.75) {};
	\vertex[fill, label=below:\footnotesize{$d$}](v4) at (1.25,-.75) {};
	\vertex[fill, label=below:\footnotesize{$e$}](v5) at (0,-1.5) {};	
	\vertex[fill, label=below:\footnotesize{$f$}](v6) at (-1.25,-.75) {};
	\vertex[fill, label=above:\footnotesize{$g$}](v7) at (-1.25,.75) {};
	
	\draw[thick] (v1)--(v2);
	\draw[thick] (v1)--(v3);
	\draw[thick] (v2)--(v7);
    \draw[thick] (v2)--(v3);
	\draw[thick] (v3)--(v4);	
	\draw[thick] (v4)--(v5);
	\draw[thick] (v5)--(v6);
    \draw[thick] (v6)--(v7);
  
    \end{scope}

    \begin{scope}[scale=.8,xshift=60, yshift=0]
     \draw[thick, -stealth] (0,0) to[out=30,in=150] (3,0);
          \node[] at (1.5,1) {$ab,\,ef\rightarrow ae,\, bf$};
    \end{scope}

    \begin{scope}[scale=.8, xshift=200, yshift=0]
	\vertex[fill, label=left:\footnotesize{$a$}](v1) at (0,0) {};
	\vertex[fill, label=above:\footnotesize{$b$}](v2) at (0,1.5) {};
	\vertex[fill, label=above:\footnotesize{$c$}](v3) at (1.25,.75) {};
	\vertex[fill, label=below:\footnotesize{$d$}](v4) at (1.25,-.75) {};
	\vertex[fill, label=below:\footnotesize{$e$}](v5) at (0,-1.5) {};	
	\vertex[fill, label=below:\footnotesize{$f$}](v6) at (-1.25,-.75) {};
	\vertex[fill, label=above:\footnotesize{$g$}](v7) at (-1.25,.75) {};

	\draw[thick] (v1)--(v5);
	\draw[thick] (v1)--(v3);
	\draw[thick] (v2)--(v7);
    \draw[thick] (v2)--(v3);
	\draw[thick] (v3)--(v4);	
	\draw[thick] (v4)--(v5);
	\draw[thick] (v2)--(v6);
    \draw[thick] (v6)--(v7);
  
    \end{scope}

    \begin{scope}[scale=.8,xshift=260, yshift=0]
     \draw[thick, -stealth] (0,0) to[out=30,in=150] (3,0);
               \node[] at (1.5,1) {$ac,\,fg\rightarrow af,\, cg$};
    \end{scope}

    \begin{scope}[scale=.8, xshift=400, yshift=0]
	\vertex[fill, label=left:\footnotesize{$a$}](v1) at (0,0) {};
	\vertex[fill, label=above:\footnotesize{$b$}](v2) at (0,1.5) {};
	\vertex[fill, label=above:\footnotesize{$c$}](v3) at (1.25,.75) {};
	\vertex[fill, label=below:\footnotesize{$d$}](v4) at (1.25,-.75) {};
	\vertex[fill, label=below:\footnotesize{$e$}](v5) at (0,-1.5) {};	
	\vertex[fill, label=below:\footnotesize{$f$}](v6) at (-1.25,-.75) {};
	\vertex[fill, label=above:\footnotesize{$g$}](v7) at (-1.25,.75) {};

	\draw[thick] (v1)--(v5);
	\draw[thick] (v1)--(v3);
	\draw[thick] (v2)--(v7);
    \draw[thick] (v7)--(v3);
	\draw[thick] (v3)--(v4);	
	\draw[thick] (v4)--(v5);
	\draw[thick] (v2)--(v6);
    \draw[thick] (v6)--(v1);
  
    \end{scope}
    
    \end{tikzpicture}

    \caption{An example of a walk through \(\mathcal{G}(\{3,3,2,2,2,2,2\})\) demonstrating a sequence of two possible double edge swaps, first swapping the endpoints of \(ab\) and \(ef\), then swapping the endpoints of \(ac\) and \(fg\).}
    \label{fig:double edge swap}
\end{figure}

If performing a particular double-edge swap on \(G\) would produce a graph that is outside the space (i.e. the new graph has a loop or multiedge or is disconnected), that swap will correspond to a loop on the vertex \(G\) in \(\mathcal{G}(\mathbf{d})\). 
It is shown in~\cite{SIAMMCMC} that \(\mathcal{G}(\mathbf{d})\) is regular, strongly connected, and aperiodic.
Thus, as before, the samples asymptotically obey a uniform distribution.

Our sampling method begins by generating a connected graph with degree sequence \(\mathbf{d}\) via the Havel-Hakimi algorithm~\cite{Havel-Hakimi} and randomly performing double-edge swaps.
Again, we employ subsampling, typically taking every 5-th or 11-th graph depending on the number of vertices.
As before, our subsampling frequency is not based on any information regarding the target distribution or mixing time.

\subsection{Data and Observations}\label{subsec:vert_edge_data}

In each of our experiments, we generated an ensemble of graphs with specified invariants: number of vertices and either number of edges or degree sequence.
For each of our samples, we computed the average local clustering and the number of facets for $\PG$, and we generated a plot displaying the results.
We discuss these experiments and results in this subsection.

\subsubsection{Fixed Number of Edges}

We first used single-edge swap MCMC methods to generate ensembles of graphs with a fixed number of vertices and edges.
We computed \(\Cws(G)\) and \(\facets(\PG)\) for each graph in our ensemble, and plotted the resulting ordered pairs.
In each of these plots, the number of facets appears to generally increase as \(\Cws\) increases.
Additionally, we observe that these plots often exhibit heteroscedasticity, i.e., the variance of the data changes as \(\Cws\) increases. 
Figure~\ref{fig: 11vert} shows two representatives of the types of plots we observe.

\begin{figure}[h]
    \centering
    \subfloat[1001 graphs with 11 vertices and 25 edges.]{\includegraphics[width=0.49\textwidth]{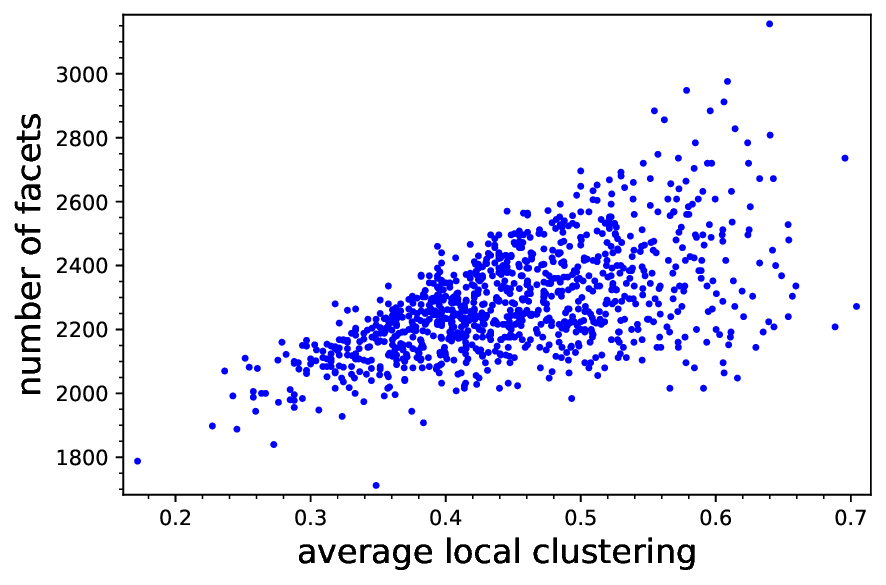}}
    \subfloat[201 connected graphs with 15 vertices and 37 edges]{\includegraphics[width=0.49\textwidth]{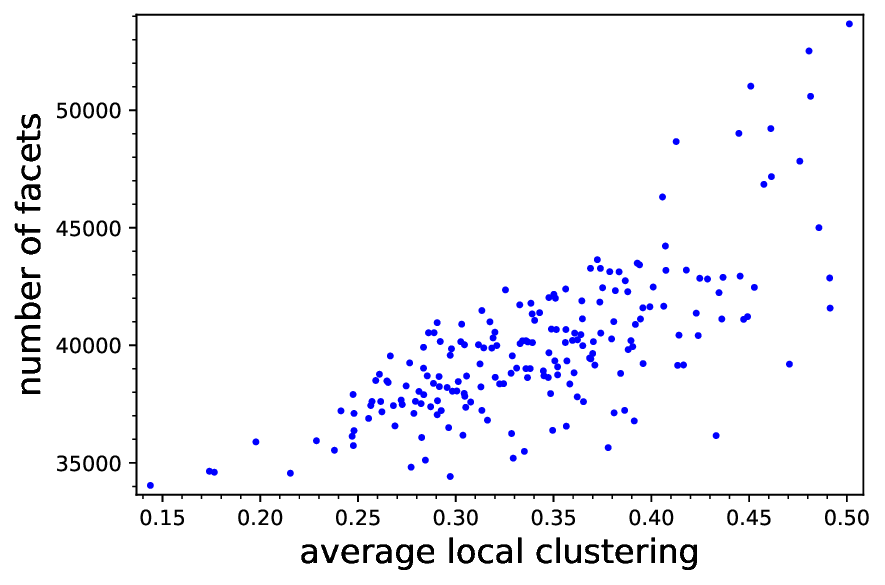}}
    \caption{Fixed edge data for 11 and 15 vertices}
    \label{fig: 11vert}
\end{figure}

Figure~\ref{fig:table of single edge plots} contains plots for connected graphs on $11$ vertices with various fixed numbers of edges.
These plots suggest that the heteroscedasticity phenomenon, where the variance in the number of facets increases as average local clustering increases, arises across multiple fixed edge counts.
It is important to note that in the data plots for Figure~\ref{fig:table of single edge plots}, all the axes change scale.
Thus, for example, the plot for $11$ vertices and $20$ edges has average local clustering range from near $0$ to $0.8$, and facet numbers range from under $2000$ to over $4000$.
However, the plot with $35$ edges has a significantly restricted range for both the horizontal and vertical axis.
This is the same phenomenon that appeared in Figure~\ref{fig:table of plots}, where higher edge density yields less variation for both average local clustering and facet numbers.
Nevertheless, even at different scales, a positive correlation is observed.

\begin{figure}[h]
\subfloat[11 vertices, 20 edges]{\includegraphics[scale=0.49]{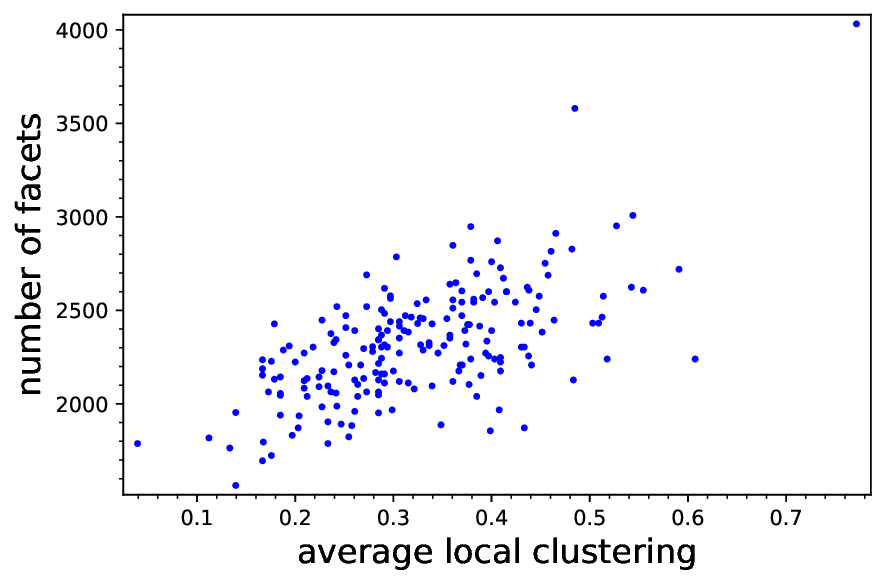}}
\subfloat[11 vertices, 25 edges]{\includegraphics[scale=0.49]{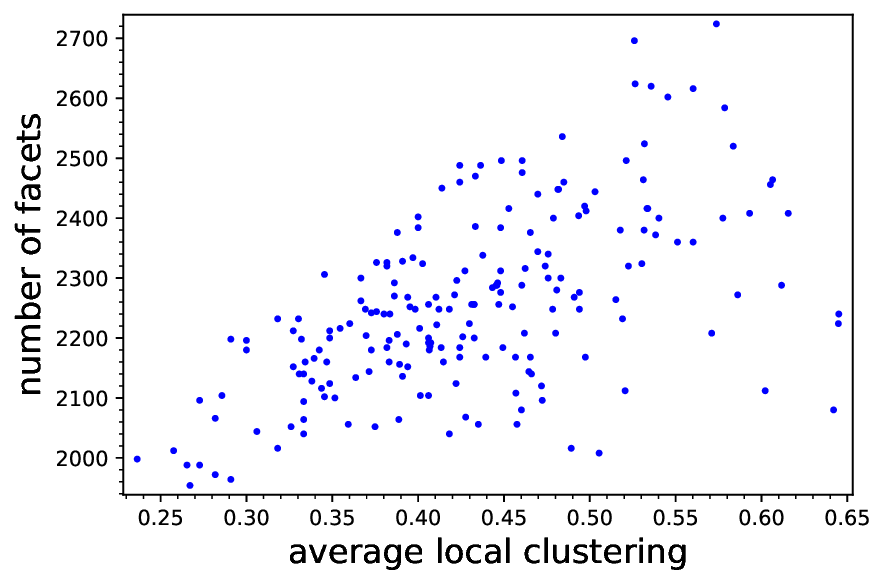}}\\
\subfloat[11 vertices, 30 edges]{\includegraphics[scale=0.49]{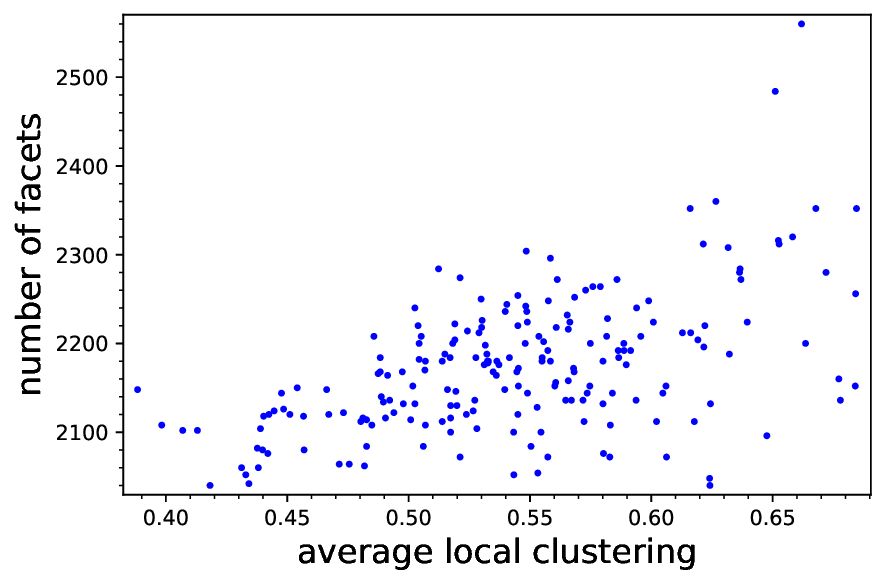}}
\subfloat[11 vertices, 35 edges]{\includegraphics[scale=0.49]{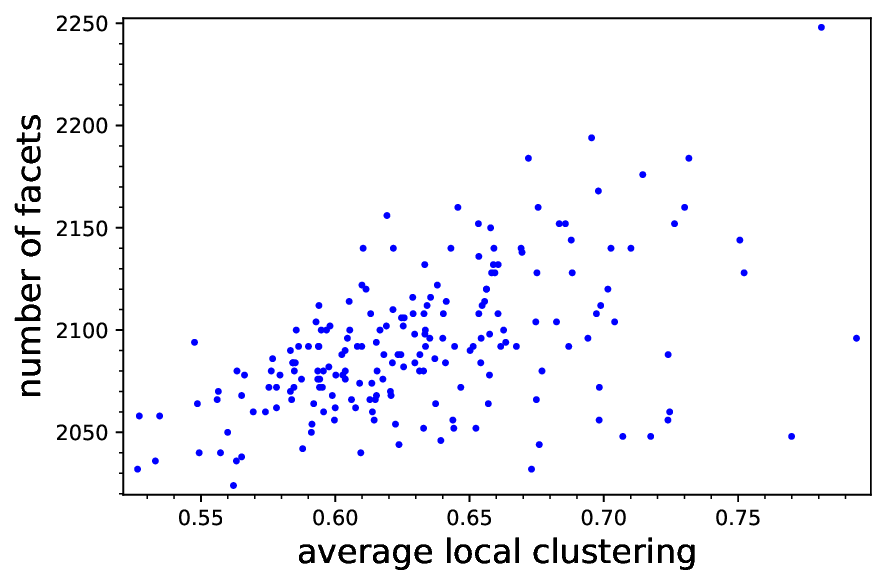}}
    \caption{Data for graphs on 11 vertices with varying edge numbers}
    \label{fig:table of single edge plots}
\end{figure}

\subsubsection{Graphs with Hubs}\label{sec: hubs}

Our next experiments used double-edge swap MCMC methods to generate connected graphs with a fixed degree sequence.
In real-world graphs, it is common for there to be a large number of lower-degree vertices and a small number of higher-degree vertices; the latter are often referred to as \emph{hubs}.
This has led to the development of various random graph models that exhibit scale-free degree distributions~\cite{newmannetworks}.
Because we are limited in the dimensions of \(\PG\) for which we can effectively compute the number of facets, the magnitude of hubs that we can study are not as great as often found in large real-world networks.
However, Figures~\ref{fig:18verthub} and~\ref{fig:17verthub} are representative of the data we have observed in ensembles of graphs on less than $20$ vertices where the degree sequence has a small number of high-degree vertices.
In all of the experiments we have conducted for graphs with hubs, a  correlation between \(\Cws\) and \(\facets(G)\) is observed.

\begin{figure}[h]
\centering
\begin{minipage}{0.49\textwidth}
        \centering
        \includegraphics[width=\textwidth]{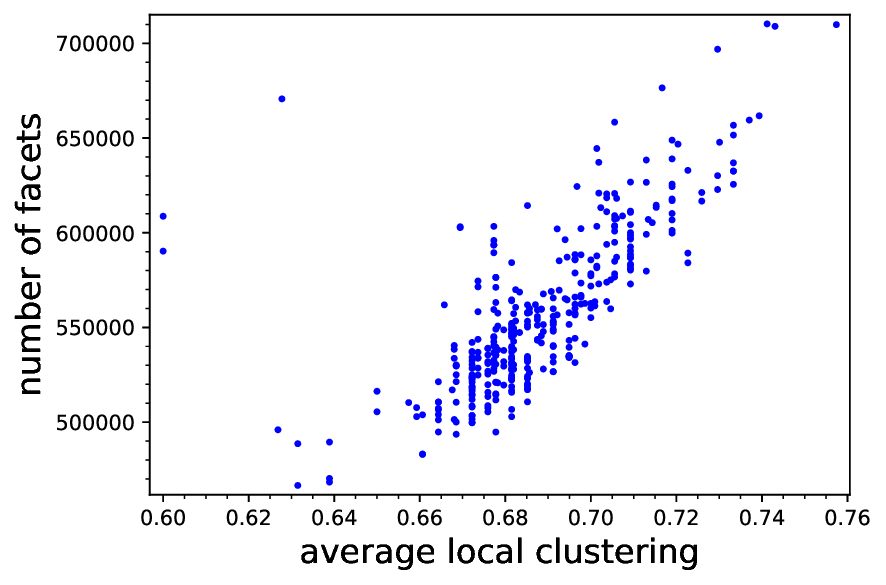}
        \caption{Data from 370 connected graphs having 18 vertices and degree sequence \([3,3,4,4,\ldots,4,4,5,5,16,16]\).}
    \label{fig:18verthub}
    \end{minipage}
    \hfill
    \begin{minipage}{0.49\textwidth}
        \centering
        \includegraphics[width=\textwidth]{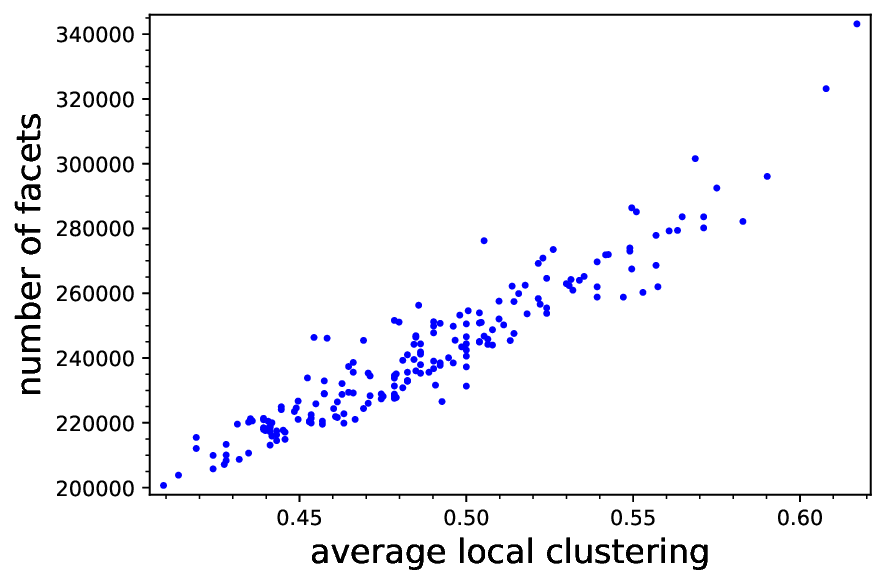}    \caption{Data from an ensemble of 192 connected graphs with 17 vertices and degree sequence \([3,3,3,4,4,\ldots,4,4,5,5,5,5,15]\) obtained by MCMC with double-edge swaps.}
        \label{fig:17verthub}
    \end{minipage}
\end{figure}

\subsubsection{\(k\)-Regular Graphs}\label{sec: regular}

A classic family of graphs with a fixed degree sequence are \(k\)-regular graphs, i.e., graphs where every vertex has degree \(k\).
The number of connected regular graphs on \(n\) vertices is a well-studied integer sequence~\cite[Sequence A005177]{OEIS}.
Based on our previous observations, for larger values of \(k\) the average local clustering should be higher, due to higher edge density.
What is less clear is what to expect from the number of facets of the symmetric edge polytope as \(k\) varies, especially for small \(k\).

Figure~\ref{fig:12vertregularsall} provides a plot of average local clustering and number of facets for 3390 connected regular graphs on 12 vertices (that there are 18979 such graphs), sampled using double-edge MCMC for each \(k\).
Note that for small values of \(k\), there is a trend that \(\facets(G)\) increases as \(\Cws\) increases.
When \(k\) is small, the average local clustering is generally less than \(0.4\) and the number of facets varies widely.
As \(k\) increases, the average local clustering varies less, and the number of facets concentrates near the value of \(\facets(K_{12})=4094\).
In general, it is reasonable to expect that as the edge density of a \(k\)-regular graph \(G\) increases, and thus as the graph becomes closer to a complete graph, there will be many connected spanning bipartite subgraphs where the induced subgraph of \(G\) on each shore of the bipartition is connected.

\begin{figure}[h]
    \centering
        \includegraphics[width=0.5\textwidth]{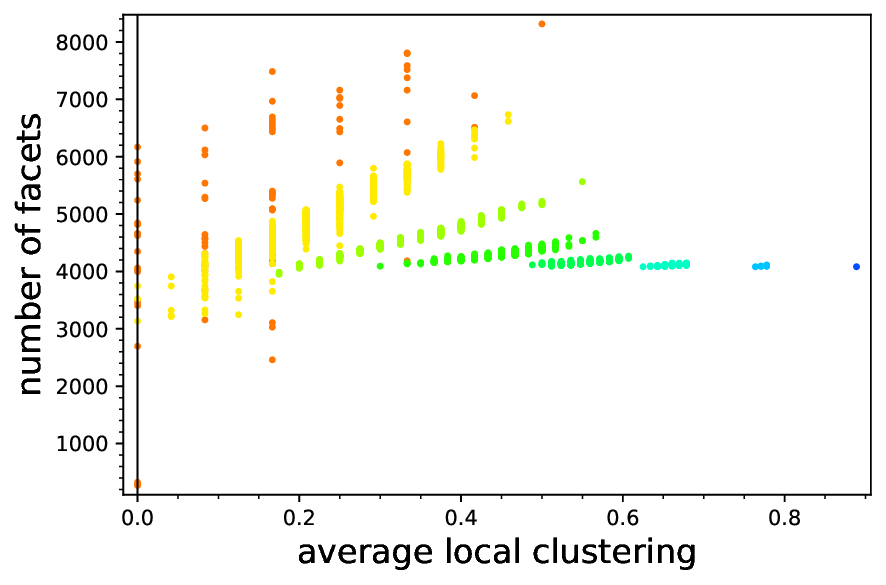} 
        \caption{Data from a sample of 3390 connected \(k\)-regular graphs on 12 vertices obtained by MCMC with double-edge swaps, for \(k=3,4,5,6,7,8,9,10\).
        Each value of \(k\) corresponds to a different color in the plot, with lower \(k\) having smaller \(\Cws\) values.
        Note that for larger \(k\), the number of facets is approximately \(\facets(K_{12})\).
        }
    
    \label{fig:12vertregularsall}
\end{figure}

Additional data plots from ensembles of \(k\)-regular graphs on 18 vertices for \(k=3,7\) are given in Figures~\ref{fig:18vert3red} and~\ref{fig:18vert7reg}.
Both of these plots further illustrate the phenomenon shown in Figure~\ref{fig:12vertregularsall}, where \(k\)-regular graphs for smaller \(k\) have significantly larger variance in the number of facets (as seen in the range of the vertical axes), and have lower average local clustering.

\begin{figure}[h]
    \centering
    \begin{minipage}{0.49\textwidth}
        \centering
        \includegraphics[width=\textwidth]{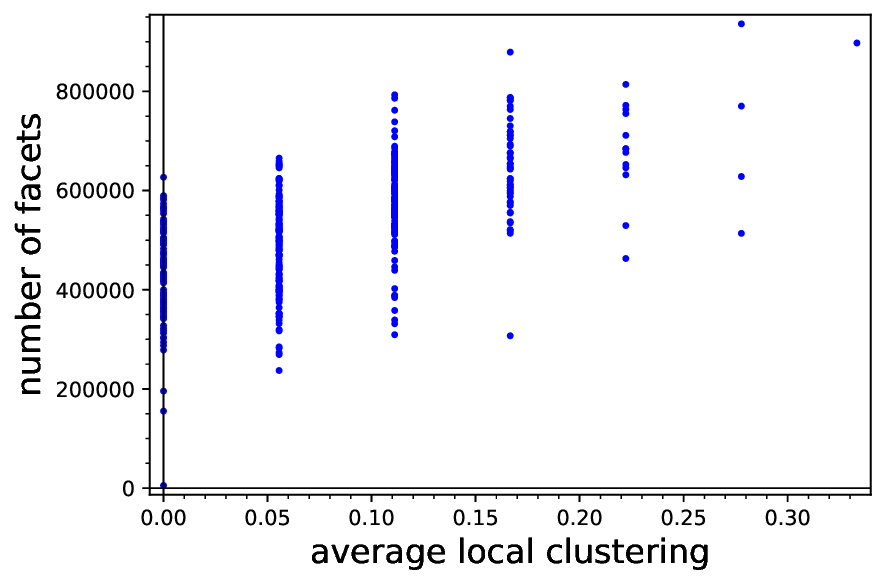} 
        \caption{Data from an ensemble of 397 connected \(3\)-regular graphs on 18 vertices obtained by MCMC using double-edge swaps.
        Note the large range of the vertical axis.}
        \label{fig:18vert3red}
    \end{minipage}
    \hfill
    \begin{minipage}{0.49\textwidth}
        \centering
        \includegraphics[width=\textwidth]{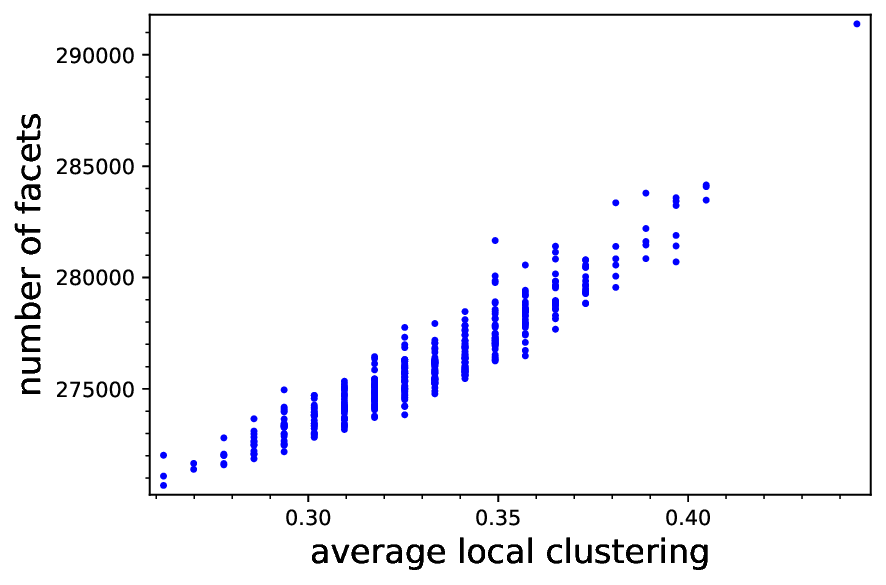}    \caption{Data from an ensemble of 399 connected \(7\)-regular graphs on 18 vertices obtained by MCMC using double-edge swaps.
        Note the narrow range on the vertical axis.}
        \label{fig:18vert7reg}
    \end{minipage}
\end{figure}

\section{Discussion}\label{sec:discussion}

It is extremely challenging to understand the facet structure of \(\PG\) for a random graph generated by any of the models we have considered in this work.
We will discuss an example of a toy theoretical result in this direction prior to our final discussion.

\subsection{Case: \(n\) Vertices, \(n\) Edges, Fixed Degree Sequence}
When the number of vertices is equal to the number of edges, we can compute \(\facets(\PG)\) for any connected graph with a fixed degree sequence. 
Further, we can describe a graph with that degree sequence that attains the maximum number of facets.
This is possible because, in this case, \(\facets(\PG)\) depends only on the length of the unique cycle in \(G\).
To get this facet information for a given degree sequence, we need only know what lengths of cycle are attainable with that sequence.

\begin{prop}\label{prop: cycle lengths by degree sequence}
Let \(G\) be a simple graph with \(n\) vertices and \(n\) edges with degree sequence \(\mathbf{d}=\{d_i\}_{i=1}^n\) (\(d_i\geq d_{i+1}\)). Let \(m_G\) denote the minimum possible length of a cycle in \(G\), and let \(M_G\) denote the maximum possible length of a cycle in \(G\).
\begin{enumerate}
    \item[(i)] If \(d_i=2\) for all \(i\), \(m_G=M_G=n\).
    \item[(ii)] If \(d_k\geq 2\) and \(d_i=1\) for \(i>k\) with \(k<n\), \(m_G=3\) and \(M_G=k\).
\end{enumerate}
\end{prop}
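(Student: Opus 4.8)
The plan is to use the standard fact that a connected graph with \(n\) vertices and \(n\) edges is \emph{unicyclic}: it is a spanning tree together with one extra edge, and so it has a unique cycle. Since every vertex lying on a cycle has degree at least \(2\), the unique cycle of any connected realization of \(\mathbf{d}\) can use only vertices of degree at least \(2\); its length is therefore at least \(3\) (the graph is simple) and at most the number of vertices of degree \(\geq 2\). I read \(m_G\) and \(M_G\) as the extremes of the cycle length taken over all connected graphs realizing \(\mathbf{d}\), matching the surrounding phrase ``what lengths of cycle are attainable with that sequence.'' The task is then to (a) bound the cycle length by the count of degree-\(\geq 2\) vertices and (b) exhibit realizations attaining the claimed extremes. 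Part (i) is immediate: if every \(d_i=2\) then a connected realization is a single cycle through all \(n\) vertices, forcing \(m_G=M_G=n\).

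For part (ii), write \(v_1,\dots,v_k\) for the vertices of degree \(\geq 2\) and note that counting edges gives \(\sum_{i=1}^k d_i = 2n-(n-k)=n+k\); since \(k<n\) their average degree \((n+k)/k\) exceeds \(2\), so the maximum degree satisfies \(d_1\geq 3\), and any realization has \(k\geq 3\) because its cycle needs at least three vertices of degree \(\geq 2\). The bounds \(M_G\leq k\) and \(m_G\geq 3\) follow from the structural remarks above. To realize a cycle of length exactly \(k\), I would place \(v_1,\dots,v_k\) on a \(k\)-cycle and hang \(d_i-2\) leaves off each \(v_i\); the number of leaves used is \(\sum_{i=1}^k(d_i-2)=(n+k)-2k=n-k\), which matches the number of degree-\(1\) vertices exactly, giving a connected simple unicyclic graph with cycle length \(k\).

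The crux is producing a realization whose cycle is a triangle. I would put \(v_1,v_2,v_3\) on a triangle, leaving residual capacities \(d_i-2\) at \(v_i\), and then build the remaining forest by a contraction trick: collapse the triangle to a single node \(z\) of degree \((d_1-2)+(d_2-2)+(d_3-2)=d_1+d_2+d_3-6\) and consider the sequence consisting of this value together with \(d_4,\dots,d_k\) and the \(n-k\) ones. This sequence has \(N=n-2\) entries summing to \(2n-6=2(N-1)\), and because \(d_1\geq 3\) the entry at \(z\) is at least \(1\); hence all entries are positive integers summing to \(2(N-1)\), so by the elementary characterization of tree degree sequences it is realized by a tree \(T\). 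Expanding \(z\) back into the triangle and distributing its \(d_i-2\) incident tree-edges to \(v_i\) yields a connected simple graph in which \(v_i\) has degree \(d_i\); since contracting the triangle recovers the acyclic \(T\), the only cycle is the triangle, so this is a connected unicyclic realization with \(m_G=3\).

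The main obstacle is this triangle construction, and within it the connectivity requirement: the contraction argument produces a connected graph only because the collapsed triangle node has positive degree, which in turn rests on the inequality \(d_1\geq 3\) forced by \(k<n\). I would take care to confirm that expanding \(z\) introduces no multiple edges (each neighbor of \(z\) in \(T\) lies in \(\{v_4,\dots,v_k\}\) or among the leaves and attaches to exactly one triangle vertex) and creates no second cycle (contracting the triangle returns the tree \(T\)), and to check the small ranges of \(k\) and \(n\) so that the intermediate sequences are well defined.
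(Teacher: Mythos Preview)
Your proof is correct, but it differs from the paper's argument in two ways.

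First, you explicitly argue the bounds \(m_G\ge 3\) and \(M_G\le k\) (the latter via the observation that degree-\(1\) vertices cannot lie on the unique cycle), whereas the paper simply constructs realizations and leaves these easy bounds implicit. Second, for the existence of a realization with a triangle, you use a contraction trick together with the characterization of tree degree sequences, while the paper gives a single explicit construction that produces a realization with cycle length \(m\) for \emph{every} \(3\le m\le k\): it places a cycle on the first \(m\) vertices, a path on vertices \(m+1,\dots,k\), joins vertex \(1\) to vertex \(m+1\), and then attaches the \(n-k\) leaves to soak up the remaining degree. Your argument is a bit more structural and handles only the two extreme cycle lengths (which is all the proposition asserts); the paper's argument is more hands-on and, as a bonus, shows that every intermediate cycle length is attainable as well, a fact used implicitly in the surrounding discussion when appealing to how \(\facets(G)\) varies with cycle length.
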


\begin{proof}
To show (i), note that the only simple connected graph with this degree sequence is the \(n\) cycle.  
So \(m_G=M_G=n\).

To show (ii), we construct a simple, connected graph \(G\) with a cycle of length \(m\) for \(3\leq m\leq k\) with the following procedure. 
\begin{enumerate}
    \item Construct a cycle, \(C\), on vertices \(1,\dots,m\).
    \item If \(m=k\), skip to (4). Otherwise, construct a path, \(P\), on vertices \(m+1,\dots, k\) with \(k-(m+1)\geq 0\) edges. 
    \item Now, \(d_1\geq 3\) and \(d_{m+1}\geq 2\), so vertex 1 is incident to at least one edge not on \(C\) and vertex \(m+1\) is incident to at least one edge not on \(P\). So we connect 1 and \(m+1\).
    \item The remaining edges are incident to leaves. There are \(n-k\) leaves and 
    \begin{equation*}
        \begin{split}
            &n-(\text{number of edges of \(C\)})-(\text{number of edges of \(P\)})-1\\&= n-m-(k-m-1)-1 \\&= n-k
        \end{split}
    \end{equation*}
    edges that must be added. So there are exactly enough open half-edges on the vertices \(1,\dots, k\) to be filled by the leaves.
\end{enumerate}

\end{proof}

\begin{figure}[h]
\begin{center}

\begin{tikzpicture}
\begin{scope}[scale=1, xshift=0, yshift=0]
	\vertex[fill,label=below:\footnotesize{$1$}](a1) at (1,0) {};
	\vertex[fill,label=below:\footnotesize{$2$}](a2) at (2,0) {};
	\vertex[fill,label=below:\footnotesize{$3$}](a3) at (3,0) {};
	\vertex[fill,label=below:\footnotesize{$4$}](a4) at (4,0) {};
	\vertex[fill,label=below:\footnotesize{$5$}](a5) at (5,0) {};
	\vertex[fill,label=below:\footnotesize{$6$}](a6) at (6,0) {};

    \node[] at (1,0.3) {\textcolor{cyan}{\tiny$3$}};
    
    \node[] at (2, 0.3) {\textcolor{cyan}{\tiny$3$}};
    
    \node[] at (3,0.3) {\textcolor{cyan}{\tiny$2$}};
    
    \node[] at (4,0.3) {\textcolor{cyan}{\tiny$2$}};

    \node[] at (5,0.3) {\textcolor{cyan}{\tiny$1$}};

    \node[] at (6,0.3) {\textcolor{cyan}{\tiny$1$}};

\end{scope}
\end{tikzpicture}

\begin{tikzpicture}
\begin{scope}[scale=1, xshift=0, yshift=0]
	\vertex[fill,label=below:\footnotesize{$1$}](a1) at (1,0) {};
	\vertex[fill,label=below:\footnotesize{$2$}](a2) at (2,0) {};
	\vertex[fill,label=below:\footnotesize{$3$}](a3) at (3,0) {};
	\vertex[fill,label=below:\footnotesize{$4$}](a4) at (4,0) {};
	\vertex[fill,label=below:\footnotesize{$5$}](a5) at (5,0) {};
	\vertex[fill,label=below:\footnotesize{$6$}](a6) at (6,0) {};
	
	\draw[thick] (a1)--(a2);
	\draw[thick] (a2)--(a3);
	\draw[thick] (a1) to[out=60,in=120] (a3);

    \node[] at (1,0.3) {\textcolor{cyan}{\tiny$3$}};
    
    \node[] at (2, 0.3) {\textcolor{cyan}{\tiny$3$}};
    
    \node[] at (3,0.3) {\textcolor{cyan}{\tiny$2$}};
    
    \node[] at (4,0.3) {\textcolor{cyan}{\tiny$2$}};

    \node[] at (5,0.3) {\textcolor{cyan}{\tiny$1$}};

    \node[] at (6,0.3) {\textcolor{cyan}{\tiny$1$}};

\end{scope}
\end{tikzpicture}

\begin{tikzpicture}
\begin{scope}[scale=1, xshift=0, yshift=0]
	\vertex[fill,label=below:\footnotesize{$1$}](a1) at (1,0) {};
	\vertex[fill,label=below:\footnotesize{$2$}](a2) at (2,0) {};
	\vertex[fill,label=below:\footnotesize{$3$}](a3) at (3,0) {};
	\vertex[fill,label=below:\footnotesize{$4$}](a4) at (4,0) {};
	\vertex[fill,label=below:\footnotesize{$5$}](a5) at (5,0) {};
	\vertex[fill,label=below:\footnotesize{$6$}](a6) at (6,0) {};
	
	\draw[thick] (a1)--(a2);
	\draw[thick] (a2)--(a3);
	\draw[thick] (a1) to[out=60,in=120] (a3);
	\draw[thick] (a1) to[out=60,in=120] (a4);

    \node[] at (1,0.3) {\textcolor{cyan}{\tiny$3$}};
    
    \node[] at (2, 0.3) {\textcolor{cyan}{\tiny$3$}};
    
    \node[] at (3,0.3) {\textcolor{cyan}{\tiny$2$}};
    
    \node[] at (4,0.3) {\textcolor{cyan}{\tiny$2$}};

    \node[] at (5,0.3) {\textcolor{cyan}{\tiny$1$}};

    \node[] at (6,0.3) {\textcolor{cyan}{\tiny$1$}};

\end{scope}
\end{tikzpicture}

\begin{tikzpicture}
\begin{scope}[scale=1, xshift=0, yshift=0]
	\vertex[fill,label=below:\footnotesize{$1$}](a1) at (1,0) {};
	\vertex[fill,label=below:\footnotesize{$2$}](a2) at (2,0) {};
	\vertex[fill,label=below:\footnotesize{$3$}](a3) at (3,0) {};
	\vertex[fill,label=below:\footnotesize{$4$}](a4) at (4,0) {};
	\vertex[fill,label=below:\footnotesize{$5$}](a5) at (5,0) {};
	\vertex[fill,label=below:\footnotesize{$6$}](a6) at (6,0) {};
	
	\draw[thick] (a1)--(a2);
	\draw[thick] (a2)--(a3);
	\draw[thick] (a1) to[out=60,in=120] (a3);
	\draw[thick] (a1) to[out=60,in=120] (a4);
	\draw[thick] (a4) to[out=60,in=120] (a6);
	\draw[thick] (a2) to[out=-60,in=-120] (a5);

    \node[] at (1,0.3) {\textcolor{cyan}{\tiny$3$}};
    
    \node[] at (2, 0.3) {\textcolor{cyan}{\tiny$3$}};
    
    \node[] at (3,0.3) {\textcolor{cyan}{\tiny$2$}};
    
    \node[] at (4,0.3) {\textcolor{cyan}{\tiny$2$}};

    \node[] at (5,0.3) {\textcolor{cyan}{\tiny$1$}};

    \node[] at (6,0.3) {\textcolor{cyan}{\tiny$1$}};

\end{scope}
\end{tikzpicture}
\end{center}
\caption{Construction of a graph with degree sequence \(\{3,3,2,2,1,1\}\) containing a 3-cycle via the algorithm in Proposition~\ref{prop: cycle lengths by degree sequence}.  The degree of each vertex is given in blue above the vertex.}
\label{fig:prop 3.2 construction}
\end{figure}
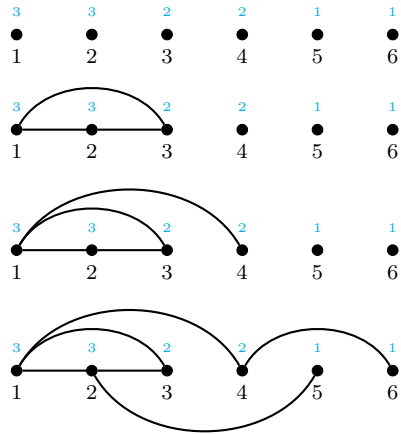

\begin{example}
Consider the degree sequence \(\{3,3,2,2,1,1\}\) for a graph on \(6\) vertices with \(6\) edges. Following the steps in Proposition~\ref{prop: cycle lengths by degree sequence}, we construct a graph with this degree sequence containing a 3-cycle (\(m=3\)) as follows. An illustration of this construction is given in Figure~\ref{fig:prop 3.2 construction}.
\begin{enumerate}
    \item Construct the 3-cycle \(C\) on the vertices \(1,2,3\) each of which have degree at least 2.
    The vertex \(3\) now has the desired degree.
    \item Since \(m=3<4=k\), we construct the path \(P\) containing only the vertex \(4\) (a path with one vertex and no edges).
    \item Since the degree of vertex \(1\) is \(3\), and it is incident to only two edges on \(C\), we can add an edge between vertices \(1\) and \(4\) to connect the cycle to the path.
    The vertex \(1\) now has the desired degree.
    \item Now, we must add one edge incident to each of the vertices \(2\) and \(4\).
    We have exactly enough leaves to add these necessary edges.
    Adding the edges \(\{2,5\}\) and \(\{4,6\}\) gives the desired degree for vertices \(2\), \(4\), \(5\), and \(6\).
\end{enumerate}
\end{example}

Given Proposition~\ref{prop: cycle lengths by degree sequence}, we can apply \cite[Theorem~3.2]{braunbruegge2022facets} to a class of graphs with \(n\) vertices and \(n\) edges that have a specific degree sequence to identify the facet-maximizing graphs for that degree sequence.
As in~\cite{braunbruegge2022facets}, we use the notation \(G\vee H\) to denote a graph obtained by identifying graphs \(G\) and \(H\) at a single vertex. 

\begin{corollary}\label{cor:nn bound}
Let \(G\) be a simple graph with \(n\) vertices and \(n\) edges with degree sequence \(\{d_i\}_{i=1}^n\) (\(d_i\geq d_{i+1}\)). Let \(\facets(G)\) denote the number of facets of the symmetric edge polytope \(\PG\).
\begin{enumerate}
    \item[(i)] If \(d_n=2\), 
    \[
    \facets(G)=\facets(C_n).
    \]
    \item[(ii)] If \(d_k>1\) and \(d_{k+1}=1\) for some \(k<n\), and \(\ell\) is the largest odd number satisfying \(\ell\leq k\),
    \[
    \facets(G)\leq \facets(C_\ell \vee P_{n-\ell})
    \]
\end{enumerate}
Here, \(C_m\) denotes a cycle with \(m\) edges, and \(P_m\) denotes a path with \(m\) edges.
\end{corollary}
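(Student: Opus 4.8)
The plan is to reduce both parts to the explicit facet count for unicyclic graphs and then, for (ii), to a one-variable optimization over the cycle length. Since $G$ is connected with $n$ vertices and $n$ edges, it is unicyclic, so it has a well-defined cycle length $c$; I write $\phi(c)$ for the resulting value of $\facets(G)$, which by \cite[Theorem~3.2]{braunbruegge2022facets} depends only on $n$ and $c$. Part (i) is then immediate: if $d_n=2$ then every degree is at least $2$, and since $\sum_i d_i = 2n$ over $n$ vertices, all degrees equal $2$, so the unique connected realization is $C_n$ and $\facets(G)=\facets(C_n)$. The substance is in (ii).

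For (ii), I would first record the range of the cycle length. By Proposition~\ref{prop: cycle lengths by degree sequence}(ii) the cycle of $G$ has length $c$ with $3=m_G\le c\le M_G=k$. Thus $\facets(G)=\phi(c)$ with $3\le c\le k$, and it suffices to prove $\phi(c)\le\phi(\ell)$, where $\ell$ is the largest odd integer $\le k$: indeed $C_\ell\vee P_{n-\ell}$ is itself a connected $n$-vertex, $n$-edge graph with cycle length $\ell$ (it has $\ell+(n-\ell)=n$ edges and $n$ vertices), so $\facets(C_\ell\vee P_{n-\ell})=\phi(\ell)$, and $\ell\ge 3$ is guaranteed since $M_G=k\ge m_G=3$.

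The core step is the analysis of $\phi$. The cited theorem provides—and one can also read this off from Lemma~\ref{lem:facetsubgraphs}, since the facet subgraphs of a unicyclic graph are obtained by keeping all $n-c$ tree edges and either keeping all cycle edges (when $c$ is even) or deleting exactly one cycle edge (when $c$ is odd)—the formula
\[
\phi(c)=\begin{cases} 2^{\,n-c}\binom{c}{c/2}, & c\ \text{even},\\ c\cdot 2^{\,n-c}\binom{c-1}{(c-1)/2}, & c\ \text{odd}. \end{cases}
\]
I would then extract from this three elementary ratios, valid for $c=2j+1\ge 3$: $\phi(2j+1)/\phi(2j)=(2j+1)/2>1$, $\phi(2j+1)/\phi(2j+2)=j+1>1$, and $\phi(2j+3)/\phi(2j+1)=(2j+3)/(2j+2)>1$. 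The relevant consequences are that $\phi$ strictly increases along the odd integers and that each even length is dominated by the odd length immediately below it. Hence on $\{3,\dots,k\}$ every even $c$ (necessarily $c\ge 4$) satisfies $\phi(c)<\phi(c-1)\le\phi(\ell)$, while every odd $c$ satisfies $\phi(c)\le\phi(\ell)$ by monotonicity; so the maximum of $\phi$ on $\{3,\dots,k\}$ is $\phi(\ell)$, giving $\facets(G)=\phi(c)\le\phi(\ell)=\facets(C_\ell\vee P_{n-\ell})$.

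I expect the main difficulty to be bookkeeping rather than conceptual: verifying the parity-comparison inequalities cleanly from the binomial identity $\binom{2j+2}{j+1}=\frac{2(2j+1)}{j+1}\binom{2j}{j}$ and confirming that every admissible $c\in\{3,\dots,k\}$ is dominated by $\phi(\ell)$ uniformly in the parity of $k$ (when $k$ is even, $\ell=k-1$ and the case $c=k$ is exactly $\phi(k)<\phi(k-1)$). No tightness claim is required, since the statement asserts only an upper bound; if desired, one can additionally note that Proposition~\ref{prop: cycle lengths by degree sequence}(ii) produces a graph with degree sequence $\mathbf{d}$ and cycle length $\ell$ that actually attains $\phi(\ell)$.
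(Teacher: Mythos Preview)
Your proposal is correct and follows essentially the same route the paper indicates: combine Proposition~\ref{prop: cycle lengths by degree sequence} (to bound the cycle length by $3\le c\le k$) with \cite[Theorem~3.2]{braunbruegge2022facets} (which shows $\facets(G)$ depends only on $n$ and $c$ and is maximized at the largest odd cycle length). The paper treats the corollary as immediate from these two inputs, whereas you have unpacked the optimization over $c$ explicitly via the formula for $\phi(c)$ and the three ratio comparisons; this extra detail is consistent with the paper's remark that the proof of the cited theorem ``describes how $\facets(G)$ changes as the cycle length varies,'' so you are reproducing that analysis rather than taking a different path.
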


Even further than this, the proof of~\cite[Theorem 3.2]{braunbruegge2022facets} describes how \(\facets(G)\) changes as the cycle length varies.
For any degree sequence that allows a cycle of length at least \(5\), a graph \(G\) that maximizes \(\facets(G)\) has \(\Cws(G) =0\). 
In fact, the only graphs on \(n\) vertices and \(n\) edges that have nonzero average local clustering are those that have a single 3-cycle. 
Even these graphs have \(\Cws\) approaching 0 as \(n\) increases.

A precise theoretical result was attainable in this case because the small number of edges relative to number of vertices significantly restricts the structure of the graphs we consider.

\subsection{Final Discussion}
The goal of our current investigation of symmetric edge polytopes was to study relationships between graph structure and facet structure, with a focus on both theoretical and empirical results for various random graph models.
For Erd\"os-Renyi random graphs \(G~\sim~G(n,p)\), we observed empirically that  as $p$ increases, the number of facets of \(\PG\) tends toward \(2^n-2\), the number of facets of \(P_{K_n}\).
We also established a threshold of \(p>1/2\) such that, with high probability, any sufficiently even bipartition \((A, V\setminus A)\) of the vertices of \(G\sim G(n,p)\) induces a facet subgraph of \(G\). 
Thus, with high probability, the number of such bipartitions gives a lower bound on \(\facets(\PG)\) for Erd\"os-Renyi graphs.
Furthermore, with high probability, \(G\) has a facet subgraph that supports exactly the facet hyperplanes supported by the same bipartition on the complete graph. 

While the trends seen when we sample from \(G(n,p)\) have no apparent connection to the average local clustering coefficient, which is \(p\), correlations are observed when we consider different approaches to sampling graphs. 
An exhaustive computation for all connected graphs on eight vertices shows a positive correlation between average local clustering and number of facets.
Using Markov Chain Monte Carlo methods to sample from the space of connected graphs with a fixed number of vertices and edges, we see that graphs with higher \(\Cws\) tend to produce polytopes with more facets. 
Additionally, data from these ensembles indicates that the range in facet counts tends to increase as \(\Cws\) increases. 
When we further restrict to the space of connected graphs with fixed degree sequence (in particular graphs with hubs and \(k\)-regular graphs), this correlation remains and we observe less variety in the range of facet counts across difference small intervals of clustering values. 

\begin{figure}[h]
    \centering
        \includegraphics[width=0.6\textwidth]{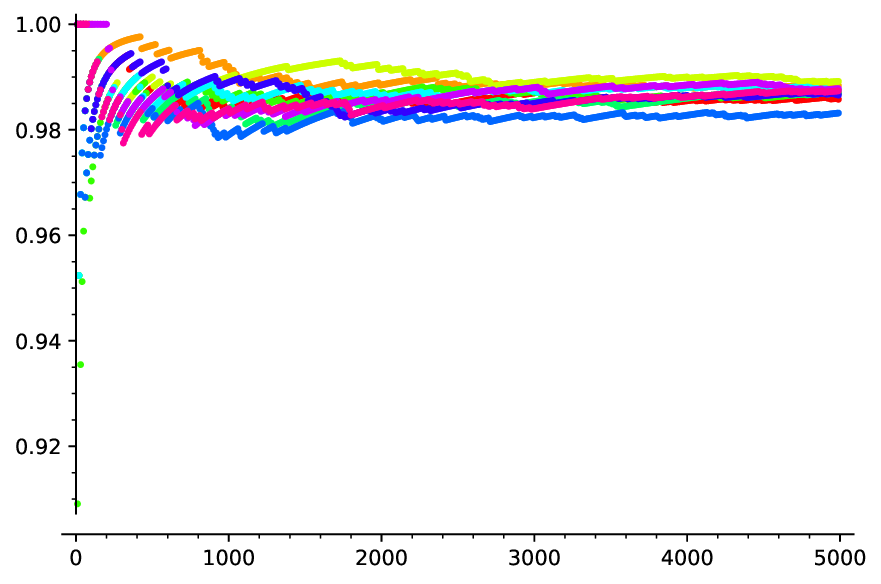} 
        \caption{A collection of sequence plots for a sample of ten 11-regular connected graphs \(G\) on 5000 vertices showing how the fraction of sampled subsets \(A_i\) inducing a connected \(\bis{A_i}\) changes over time and appears to stabilize near a value between \(0.98\) and \(1\).}
    
    \label{fig:Ef11regular5000}
\end{figure}

In service of counting or bounding the number of facets of symmetric edge polytopes, it would be useful to understand how many and which induced bipartite subgraphs are facet subgraphs for a given \(G\), particularly in the case where \(G\) is connected and sparse. 
Toward this end, we can consider plots such as Figure~\ref{fig:Ef11regular5000}, which shows the results of an experiment identifying facet subgraphs of some 11-regular graphs on 5000 vertices.
To create this plot, we generated ten 11-regular connected graphs on 5000 vertices using double-edge swap MCMC, sampling after every 100001 swaps. 
For each graph \(G\), a sequence of 5000 random subsets \((A_1,\ldots,A_{5000})\) of the vertex set \(V\) was generated. 
For each \(0<10j\leq 5000\), we compute the fraction \(b_{10j}\) of the subsets in \((A_1,\ldots,A_{10j})\) which induce connected bipartite subgraphs \(\bis{A_i}\) and plot the point \((10j,b_{10j})\). 
This process yields a sequence plot for each sampled graph. 
As can be seen in this figure, as \(j\) increases, for each graph the fraction of subsets inducing a connected bipartite subgraph appears to stabilize near a value between \(0.98\) and \(1\). 
Note that this is an extremely small sample of the \(2^{5000}\) subsets of the vertex set, and a small sample of \(11\)-regular graphs. 
Nonetheless, these results are surprising given that these graphs, though large, are sparse with only \(27,500\) of \(12,497,500\) possible edges, or approximately \(0.22\%\). 
An open question of interest is to determine, for a fixed \(k\), asymptotic estimates for the expected number of induced bipartite subgraphs that are facet subgraphs for a \(k\)-regular graph on \(n\) vertices.

\acknowledgements
\label{sec:ack}
MK was partially supported by National Science Foundation award DMS-2005630. BB and KB were partially supported by National Science Foundation award DMS-1953785.
The authors thank Tianran Chen and Rob Davis for helpful discussions that motivated this project.
The authors thank Dhruv Mubayi for helpful suggestions regarding random graphs.

\nocite{*}
\bibliographystyle{alpha}

\label{sec:biblio}

\end{document}